\newtheorem{thm}{Theorem}[section]
\newtheorem{theorem}[thm]{Theorem}
\newtheorem{lemma}[thm]{Lemma}
\newtheorem{proposition}[thm]{Proposition}
\theoremstyle{definition}
\newtheorem{definition}[thm]{Definition}
\newtheorem{remark}[thm]{Remark}
\begin{document}




\newcommand{\id}{\mathrm {id}}
\newcommand{\R}{\mathbb{R}}
\newcommand{\C}{\mathbb{C}}
\newcommand{\Z}{\mathbb{Z}}
\newcommand{\N}{\mathbb{N}}
\newcommand{\bD}{\mathbb{D}}
\newcommand{\bG}{\mathbb{G}}
\newcommand{\bP}{\mathbb{P}}
\newcommand{\bA}{\mathbb{A}}
\newcommand{\g}{\mathfrak{G}}
\newcommand{\e}{\epsilon}
\newcommand{\cA}{\mathcal{A}}
\newcommand{\cB}{\mathcal{B}}
\newcommand{\cC}{\mathcal{C}}
\newcommand{\cD}{\mathcal{D}}
\newcommand{\cI}{\mathcal{I}}
\newcommand{\cL}{\mathcal{L}}
\newcommand{\cO}{\mathcal{O}}
\newcommand{\cG}{\mathcal{G}}
\newcommand{\cJ}{\mathcal{J}}
\newcommand{\cF}{\mathcal{F}}
\newcommand{\cP}{\mathcal{P}}
\newcommand{\cU}{\mathcal{U}}
\newcommand{\ep}{\mathcal{E}}
\newcommand{\E}{\mathcal{E}}
\newcommand{\cH}{\mathcal{O}}
\newcommand{\cV}{\mathcal{V}}
\newcommand{\cPO}{\mathcal{PO}}
\newcommand{\cHol}{\mathrm{Hol}}
\newcommand{\cp}{\mathcal{P}}

\newcommand{\rAut}{\mathrm{Aut}}
\newcommand{\rC}{\mathrm{C}}
\newcommand{\rCOSp}{\mathrm{COSp}}
\newcommand{\rGL}{\mathrm{GL}}
\newcommand{\rinM}{\mathrm{\underline{M}}}
\newcommand{\rSU}{\mathrm{SU}}
\newcommand{\rSL}{\mathrm{SL}}
\newcommand{\rPC}{\mathrm{PC}}
\newcommand{\rPSL}{\mathrm{PSL}}
\newcommand{\rPGL}{\mathrm{PGL}}
\newcommand{\PGL}{\mathrm{PGL}}
\newcommand{\rSC}{\mathrm{SC}}
\newcommand{\rSO}{\mathrm{SO}}
\newcommand{\rSpO}{\mathrm{SpO}}
\newcommand{\rOSp}{\mathrm{OSp}}
\newcommand{\rSpin}{\mathrm{Spin}}
\newcommand{\rsl}{\mathrm{sl}}
\newcommand{\rM}{\mathrm{M}}
\newcommand{\rdiag}{\mathrm{diag}}
\newcommand{\rP}{\mathrm{P}}
\newcommand{\rdeg}{\mathrm{deg}}
\newcommand{\pt}{\mathrm{pt}}
\newcommand{\red}{\mathrm{red}}

\newcommand{\bm}{\mathbf{m}}

\newcommand{\M}{\mathrm{M}}
\newcommand{\End}{\mathrm{End}}
\newcommand{\Hom}{\mathrm{Hom}}
\newcommand{\inHom}{\mathrm{\underline{Hom}}}
\newcommand{\diag}{\mathrm{diag}}
\newcommand{\rspan}{\mathrm{span}}
\newcommand{\rank}{\mathrm{rank}}
\newcommand{\Gr}{\mathrm{Gr}}
\newcommand{\ber}{\mathrm{Ber}}
\newcommand{\asalg}{\mathrm{(asalg)}}
\newcommand{\csalg}{\mathrm{(csalg)}}
\newcommand{\str}{\mathrm{str}}
\newcommand{\Sym}{\mathrm{Sym}}
\newcommand{\tr}{\mathrm{tr}}
\newcommand{\defi}{\mathrm{def}}
\newcommand{\Ber}{\mathrm{Ber}}
\newcommand{\spec}{\mathrm{Spec}}
\newcommand{\sschemes}{\mathrm{(sschemes)}}
\newcommand{\sschemeaff}{\mathrm{ {( {sschemes}_{\mathrm{aff}} )} }}
\newcommand{\rings}{\mathrm{(rings)}}
\newcommand{\Top}{\mathrm{Top}}
\newcommand{\sarf}{ \mathrm{ {( {salg}_{rf} )} }}
\newcommand{\arf}{\mathrm{ {( {alg}_{rf} )} }}
\newcommand{\odd}{\mathrm{odd}}
\newcommand{\alg}{\mathrm{(alg)}}
\newcommand{\grps}{\mathrm{(grps)}}
\newcommand{\sa}{\mathrm{(salg)}}
\newcommand{\sets}{\mathrm{(sets)}}
\newcommand{\smflds}{\mathrm{(smflds)}}
\newcommand{\mflds}{\mathrm{(mflds)}}
\newcommand{\SA}{\mathrm{(salg)}}
\newcommand{\salg}{\mathrm{(salg)}}
\newcommand{\salgk}{\mathrm{(salg)_k}} 
\newcommand{\varaff}{ \mathrm{ {( {var}_{\mathrm{aff}} )} } }
\newcommand{\svaraff}{\mathrm{ {( {svar}_{\mathrm{aff}} )}  }}
\newcommand{\ad}{\mathrm{ad}}
\newcommand{\Ad}{\mathrm{Ad}}
\newcommand{\pol}{\mathrm{Pol}}
\newcommand{\Lie}{\mathrm{Lie}}
\newcommand{\Proj}{\mathrm{Proj}}
\newcommand{\rGr}{\mathrm{Gr}}
\newcommand{\rFl}{\mathrm{Fl}}
\newcommand{\rPol}{\mathrm{Pol}}
\newcommand{\rdef}{\mathrm{def}}
\newcommand{\Aut}{\mathrm{Aut}}
\newcommand{\rOsp}{\mathrm{Osp}}
\newcommand{\SUSY}{\mathrm{SUSY}}

\newcommand{\fsl}{\mathfrak{sl}}

\newcommand{\uspec}{\underline{\mathrm{Spec}} \,}
\newcommand{\uproj}{\mathrm{\underline{Proj}}}
\newcommand{\uM}{\mathrm{\underline{M}}}

\newcommand{\sym}{\cong}

\newcommand{\al}{\alpha}
\newcommand{\be}{\beta}
\newcommand{\lam}{\lambda}
\newcommand{\de}{\delta}
\newcommand{\ttau}{\tilde \tau}
\newcommand{\D}{\Delta}
\newcommand{\s}{\sigma}
\newcommand{\lra}{\longrightarrow}
\newcommand{\ga}{\gamma}
\newcommand{\ra}{\rightarrow}
\newcommand{\Der}{\mathrm{Der}}
\newcommand{\im}{\mathrm{im}}
\newcommand{\Ker}{\mathrm{ker}}
\newcommand{\NOTE}{\bigskip\hrule\medskip}
\newcommand{\tg}{\widehat{g}}

\medskip

\centerline{\Large \bf The Projective Linear Supergroup and the}

\medskip
\centerline{\Large \bf 
SUSY-preserving automorphisms of $\bP^{1|1}$}

\bigskip

\centerline{ R. Fioresi$^\dagger$, S. D. Kwok$^\star$}

\medskip
\centerline{\it $^\dagger$ Dipartimento di Matematica, Universit\`{a} di
Bologna }
 \centerline{\it Piazza di Porta S. Donato, 5. 40126 Bologna. Italy.}
\centerline{\footnotesize e-mail: rita.fioresi@UniBo.it}

\medskip
\centerline{\it $^\star$ Mathematics Research Unit, University of Luxembourg}
 \centerline{\it 6, Rue Richard Coudenhove-Kalergi, L-1359, Luxembourg}
\centerline{{\footnotesize e-mail: 
stephen.kwok@uni.lu}}

\begin{abstract}
The purpose of this paper is to describe the projective linear
supergroup, its relation with the automorphisms
of the projective superspace and to determine the supergroup of
SUSY preserving automorphisms of  $\bP^{1|1}$.
\end{abstract}

\section{Introduction}

The works of Manin \cite{ma1,ma2} and more recently of Witten et al.
\cite{witten, dw1} have drawn attention to projective supergeometry and
more specifically to SUSY curves and their moduli superspaces.

\medskip
In this paper we study the automorphisms of the projective 
superspace $\bP^{m|n}$
and its SUSY-preserving subsupergroup.
We start by defining the projective linear 
supergroup $\rPGL_{m|n}$, 
using the functor of points formalism, and then we show that
this supergroup functor is indeed  representable, that
is, it is the functor of points of  a superscheme. We achieve this by 
realizing $\rPGL_{m|n}$ as a closed subsupergroup scheme of 
$\rGL_{m^2+n^2|2mn}$, mimicking the ordinary procedure. 

In relating this supergroup scheme to
the automorphism  supergroup of $\bP^{m|n}$
we encounter a difficulty, not present in the ordinary setting, namely the
fact that the Picard group of the projective superspace is not known in 
general and involves some difficulties. This is a consequence of the fact that
the supergroup of automorphism of the projective superspace is larger
than $\rPGL_{m|n}$ for $n>1$. 
Neverthless, going to the special case of $n=1$, we
are able to give quite explicitly the projective linear supergroup
and to prove it coincides with the automorphisms of the projective superspace.

The question of singling out the SUSY-preserving automorphisms inside this supergroup was already settled over the complex field by Manin \cite{ma1} and Witten \cite{witten}, we extend their considerations to an arbitrary algebraically closed field $k$, $\mathrm{char}(k) \neq 2$, and provide some extra details of their proofs.

\medskip
The organization of this paper is as follows. In Sec. \ref{pmn-sec}
we start by reviewing some generally known facts on the projective superspace 
and its functor of points to establish our notation.
We then discuss line bundles and projective morphisms, proving,
in Prop. \ref{linebundles-obs}, 
that the Picard group of $\bP^{m|1}$ is $\Z$. To our
knowledge this result is new and gives insight into projective 
supergeometry.
In Sec. \ref{pgl-sec} we define the
projective linear supergroup in terms of functor of points
and we prove its representability by realizing it as a closed subsuperscheme
of the general linear supergroup. Then, in Sec. \ref{aut-sec} we
prove that the projective linear supergroup is the supergroup
of automorphisms of the projective superspace in the case of one odd
dimension. Though the approach in both Sec. \ref{pgl-sec} and \ref{aut-sec}
resembles closely the ordinary one, the results are novel in the
supergeometric context.
In Sec. \ref{susy-sec}, we use the machinery developed previously to
prove that the subsupergroup of $\Aut(\bP^{1|1})$ of SUSY preserving automorphisms of $\bP^{1|1}$ consists precisely of the irreducible component 
$(\mathrm{SpO}_{2|1})^0$ of the $2|1$-symplecticorthogonal supergroup 
$\mathrm{SpO}_{2|1}$ containing the identity. This section is a generalization
of the claims made by Manin in \cite{ma1} regarding complex supergeometry
and provides proofs for such claims for a generic algebraically closed field.

\medskip

{\bf Acknowledgements.} We are indebted to Prof. D. Gaitsgory for
clarifying to us the structure of line bundles over $\bP^n_A$
in the ordinary setting.
We also thank Prof. L. Migliorini for helpful discussions. 
We are also grateful to the anonymous Referee for his/her suggestions 
and remarks on the paper.

\section{The projective superspace $\bP^{m|n}$ }
\label{pmn-sec}

In this section we want to recall different, but equivalent definitions
of projective superspace and we describe the line bundles on it.
For all of our notation and main definitions
of supergeometry,
we refer the reader to \cite{ma2, dm, ccf}.

\medskip

Let $k$ be our ground ring. 

\medskip

We recall that, by definition, the functor of points of a superscheme
$X=(|X|, \cO_X)$ is the functor:
$$
X:\sschemes^o \lra \sets, \quad X(S)=\Hom_{\sschemes}(S, X),\quad
X(\phi)(f)=f \circ \phi
$$
where $\sschemes$ denotes the category of superschemes
(it is customary to use the
same letter for $X$ and its functor of points).
Equivalently (see \cite{ccf} Ch. 10),
we can view the functor of points of $X$ as: 
$X: \salg \lra \sets$:
$$
\, X(R)=\Hom_{\sschemes}(\uspec\, R, X),\quad
X(\phi)(f)=f \circ \uspec(\phi)
$$
where $\salg$ denotes the category of superalgebras (over $k$),
(we shall use the same letter also for this functor).
In fact the functor of points of
a superscheme is determined by its behaviour on the affine superscheme
subcategory, which in turn is equivalent to the
category of superalgebras (see \cite{ccf} Ch. 10, Theorem 10.2.5).
If $X=\uspec \cO(X)$, that is $X$ is affine, we have
that 
$$
X(R)=\Hom_{\sschemes}(\uspec\, R, X)=\Hom_\salg(\cO(X),R)
$$ 
where
$\cO(X)$ denotes the superalgebra of global sections of the sheaf of
superalgebras $\cO_X$. 
We say that $X(R)$ are the \textit{$R$-points} of the superscheme $X$.

\medskip
The algebraic superscheme $\bP^{m|n}$ is defined as the patching of
the $m+1$ affine superspaces $U_i = \uspec \cO(U_i)$, with 
$\cO(U_i)=\uspec k[x_{0}^i,$ $\dots$, 
$\widehat{x_i^i}$, $\dots$,  $x_m^i,\xi_1^i, \dots, \xi_n^i]$
through the change of charts: 
\begin{equation} \label{changechart}
\begin{array}{cccc}
\phi_{ij}:& \cO(U_j)[(x_i^j)^{-1}] & \mapsto & \cO(U_i)[(x_j^i)^{-1}] \\ 
&x_k^j & \mapsto & {x_k^i}/{ x_j^i} \\ 
& x_i^j & \mapsto & {1}/{ x_j^i}\\ 
&\xi_k^{ j}  & \mapsto & {\xi_k^i}/{ x_j^i} 
\end{array}
\end{equation}
(as usual 
$\widehat{x_i^i}$ means that we are omitting the indeterminate $x_i^i$).
Notice that $\cO(U_j)[(x_i^j)^{-1}]$ is the superalgebra
representing the open subscheme $U_j \cap U_i$ of $U_j$ (and similarly
for  $\cO(U_i)[(x_j^i)^{-1}]$).

\begin{proposition}\label{fopts-proj}
The $R$-points of $\bP^{m|n}$, $R \in \salg$ are
given equivalently by:
\begin{enumerate}
\item
$$
\begin{array}{c}
\bP^{m|n}(R)=\{\al: R^{m+1|n} \lra L, \, $R$\hbox{-linear, surjective}\}\big/ \, 
\sim, \\ \\
\bP^{m|n}(\psi): R^{m+1|n} \otimes_R T \lra L \otimes_R T
\end{array}
$$
where $L$ is locally free of rank $1|0$, $\psi: R \lra T$ and 
$\al: R^{m+1|n} \lra L \sim \al': R^{m+1|n} \lra L'$ if and only if 
$ker(\al)=ker(\al')$ (or equivalently, $\al \sim \al'$ if 
they differ by an automorphism of $L$ by multiplication of
an element in $R^\times$).
\item 
$$
\begin{array}{c}
\bP^{m|n}(R)=\{\al: L \hookrightarrow R^{m+1|n} \, 
$R$\hbox{-linear, injective}\},
\\ \\ \bP^{m|n}(\psi): L \otimes_R T  \lra   R^{m+1|n} \otimes_R T
\end{array}
$$
where $L$ is locally free of rank $1|0$.
\end{enumerate}

Let $\cO_S^{m+1|n}= \cO_S \otimes 
k^{m+1|n}$. The $S$-points of $\bP^{m|n}$, $S \in \sschemes$ are
given equivalently by:
\begin{enumerate}[label=(\alph*)]
\item
$
\bP^{m|n}(S)=\{\al: \cO_S^{m+1|n} \lra \cL,  \hbox{surjective} \}\big/ \,\sim$, 
$\bP^{m|n}(\psi): (\psi^*\cO_S)^{m+1|n} 
\lra  \psi^*(\cL)
$
where $\psi:T \lra S$,
$\cL$ is a line bundle on $S$ (of rank $1|0$) and 
$\al: \cO_S^{m+1|n} \lra \cL \sim \al': \cO_S^{m+1|n} \lra \cL':$ if and only if 
$ker(\al)=ker(\al')$ (or equivalently, $\al \sim \al'$ if 
they differ by an automorphism of $\cL$ by multiplication of
an element in $\cO_S^\times$).
\item 
$
\bP^{m|n}(S)=\{\al: \cL \hookrightarrow \cO_S^{m+1|n}\}, \quad
\bP^{m|n}(\psi):\psi^*\cL 
\lra (\psi^*\cO_S)^{m+1|n} 
$ 
\end{enumerate}

\end{proposition}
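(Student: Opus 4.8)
The plan is to prove description (a) first — identifying a morphism $f\colon S \ra \bP^{m|n}$ with a surjective map $\al\colon \cO_S^{m+1|n}\lra\cL$ onto a rank $1|0$ line bundle, up to $\sim$ — and then to derive (b), (1) and (2) from it by duality and affine specialisation. Both the representable functor $S\mapsto\Hom_{\sschemes}(S,\bP^{m|n})$ and the moduli functor of (a) are Zariski sheaves, so it suffices to construct mutually inverse, pullback-compatible bijections locally and glue. The key object is the \emph{tautological surjection}: on each chart $U_i$ the free module $\cO_{U_i}^{m+1|n}$ carries the surjection onto $\cO_{U_i}$ sending the $i$-th even generator to $1$, the $j$-th even generator ($j\neq i$) to $x_j^i$, and the $k$-th odd generator to $\xi_k^i$; the change-of-chart relations (\ref{changechart}) say precisely that these local surjections agree up to multiplication by the unit $x_j^i$ on $U_i\cap U_j$, hence glue to a global surjection $\cO_{\bP^{m|n}}^{m+1|n}\lra\cL_{\mathrm{taut}}$ onto a line bundle. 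Pulling this back along $f$ produces the surjection attached to an $S$-point.

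For the inverse, start from a surjective $\al\colon\cO_S^{m+1|n}\lra\cL$ and set $s_i=\al(e_i)$ (even) and $\tau_k=\al(\e_k)$ (odd), where $e_i,\e_k$ are the standard generators. Since $\cL$ has rank $1|0$, a local generator is even and a unit, so the odd sections $\tau_k$ never generate; surjectivity together with the super Nakayama lemma then forces the open loci $S_i\subseteq S$ on which $s_i$ generates $\cL$ to cover $S$. On $S_i$ the section $s_i$ trivialises $\cL$, so the $s_j/s_i$ are even functions and the $\tau_k/s_i$ odd functions, and the assignment $x_j^i\mapsto s_j/s_i$, $\xi_k^i\mapsto \tau_k/s_i$ defines a morphism $S_i\ra U_i$. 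A direct check against (\ref{changechart}) shows these agree on the overlaps $S_i\cap S_j$, so they glue to $f\colon S\ra\bP^{m|n}$. Replacing $\al$ by $u\al$ with $u\in\cO_S^\times$, or by an isomorphic quotient, leaves all ratios unchanged, so $f$ depends only on the $\sim$-class; and two surjections share the same kernel iff the induced isomorphism of targets is multiplication by a unit, which reconciles the two phrasings of $\sim$. The two constructions are visibly mutually inverse and commute with pullback $\psi^*$, giving the natural bijection between $\bP^{m|n}(S)$ and the data of (a).

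Passing to (b) is pure duality: dualising a surjective $\al\colon\cO_S^{m+1|n}\lra\cL$ onto a locally free sheaf yields a subbundle inclusion $\al^\vee\colon\cL^\vee\hookrightarrow(\cO_S^{m+1|n})^\vee\cong\cO_S^{m+1|n}$ with locally free cokernel, where $\cL^\vee$ is again of rank $1|0$; reflexivity $\cL^{\vee\vee}\cong\cL$ gives the inverse, and rescaling $\al$ by a unit changes $\al^\vee$ by a unit and hence preserves its image, which is exactly why (b) carries no quotient by $\sim$. Finally (1) and (2) are the specialisations of (a) and (b) to an affine base $S=\uspec R$, using that on an affine superscheme a line bundle is the same datum as a locally free $R$-module of rank $1|0$.

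The main obstacle is the super-geometric content of the inverse construction, namely verifying that surjectivity onto a rank $1|0$ bundle makes the loci $S_i$ cover $S$. This is where parity enters essentially: one must show that the odd generators contribute nothing to generating an even line bundle, so that covering is controlled entirely by the even sections $s_i$, and that the locus where $s_i$ is a generator is open and independent of the local trivialisation. Once this super Nakayama step is in place, the remaining verifications — the gluing compatibility with (\ref{changechart}) and the duality equivalence between (a) and (b) — are routine but bookkeeping-heavy.
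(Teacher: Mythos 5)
Your proposal is correct and follows essentially the same route as the paper, which simply defers to the standard ordinary-setting argument (citing \cite{ccf} Ch.~10) for (1) and (a) and declares the passage to (2), (b) and the affine case immediate; you have merely written out those standard details. The one genuinely super ingredient you isolate --- that the odd sections $\tau_k$ cannot generate the rank $1|0$ bundle $\cL$ at any point, so the even loci $S_i$ cover $S$ by super Nakayama --- is exactly the point that makes the classical proof go through unchanged.
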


\begin{proof} The proof relative to (1) and (a) 
works as in the ordinary setting 
and it is detailed in \cite{ccf} Ch. 10. The equivalence with (2) and (b)
is immediate. The equivalence (1) and (2) is essentially the
same as in the ordinary setting (see \cite{eh} Ch. III, Sec. 2
Prop. III-40, Cor. III-42).
\end{proof} 

For every $A \in \salg$, let $\salg_A$ denote the category of 
superalgebras over $A$. We will need to consider also 
$\bP^{m|n}_A$ that is the projective superspace
over a base $A \in \salg$. This means that we are considering the superscheme
obtained by patching the affine superspaces 
$U_i = A[x_j^{i}, \xi_k^{ i}]$,
$i,j=0, \dots, m$, $j \neq i$, $k=1, \dots, n$ 
as above. 
For example, in the Case (2) of Prop. \ref{fopts-proj} each
of the  $T$-points, $T \in \salg_A$, is identified with 
a morphisms $\al:L \lra T^{m+1|n}$ of 
$A$-modules, where $L$ and $T^{m+1|n}$ are $T$-modules which become
$A$-modules via the map $\phi:A \lra T$:  
\begin{equation} \label{proj-over-base}
\bP_A^{m|n}(T)=\Hom_{\sschemes_A}(\uspec \, T,\bP_A^{m+1|n})=
\{
\al:L \hookrightarrow T^{m+1|n} \}
\end{equation}
Notice that the functor of points of $\bP_A^{m|n}$ is defined on the
category of $A$-superalgebras or equivalently on the category of
$A$-superschemes (that is superschemes equipped with a morphism
to the superscheme $\uspec A$ and morphisms compatible with it).

We leave to the reader the generalization of the other cases of
Prop. \ref{fopts-proj} since it is straightforward. 

\medskip
We end this section with some observations on line bundles
and morphisms on $\bP_A^{m|n}$. We start with a
result completely similar to the
ordinary counterpart, that we leave
to the reader as a simple exercise (see also \cite{ccf} Ch. 9).

\begin{proposition}\label{linebundles-end}
We have a bijective correspondence between the following:
\begin{enumerate}
\item 
The set of equivalence classes of $m+n+2$-tuples 
$(L, s_0, \dots, s_m, \sigma_1, \dots, \sigma_n)$, where $L$ is a line bundle on $\bP_A^{m|n}$ globally generated by the global sections $s_0, \dots, s_m, \sigma_1, \dots, \sigma_n$ of $L$, under the relation $(L, s_0, \dotsc s_m, \sigma_1, \dotsc \sigma_n) \sim (L, s'_0, \dots s'_m, \sigma'_1, \dotsc \sigma'_n)$ if and only if there exists some
$c \in \mathcal{O}(\bP_A^{m|n})^*_0$
such that $s'_i = cs_i, \sigma'_i = c\sigma_i$ for all $i$.

\item 
The set of $A$-morphisms $\bP_A^{m|n} \to \bP_A^{m|n}$. 

\end{enumerate}
\end{proposition}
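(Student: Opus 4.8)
The plan is to read off this bijection from the functor-of-points description of $\bP_A^{m|n}$ in Proposition \ref{fopts-proj}, specialized to the test object $S=\bP_A^{m|n}$ itself. More precisely, I would invoke the relative $A$-version of part (a) of Proposition \ref{fopts-proj} (the straightforward generalization indicated in the text after \eqref{proj-over-base}), which identifies, for any $A$-superscheme $S$, the set $\Hom_{\sschemes_A}(S,\bP_A^{m|n})$ with the set of surjections $\al:\cO_S^{m+1|n}\lra\cL$ onto a rank $1|0$ line bundle $\cL$, modulo the relation $\al\sim\al'$ iff they differ by an automorphism of the target (equivalently $\Ker\al=\Ker\al'$). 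Taking $S=\bP_A^{m|n}$ turns the left-hand side into the set of endomorphisms, so the whole content is to match the right-hand side with the equivalence classes of tuples in item (1) of the statement.

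First I would set up the dictionary between surjections and generating tuples. A morphism out of the free module $\cO_S^{m+1|n}$ is nothing but the assignment of images to a homogeneous basis: the $m+1$ even basis vectors go to even global sections $s_0,\dots,s_m$ of $\cL$, and the $n$ odd basis vectors go to odd global sections $\sigma_1,\dots,\sigma_n$. Thus $\al$ is recorded exactly by the tuple $(\cL,s_0,\dots,s_m,\sigma_1,\dots,\sigma_n)$, and surjectivity of $\al$ is by definition the statement that these sections globally generate $\cL$. This gives a bijection between surjections and generating tuples, in which the identity endomorphism corresponds to the universal surjection $\cO^{m+1|n}\lra\cO(1)$ with its tautological generators $x_0,\dots,x_m,\xi_1,\dots,\xi_n$, and a general $\phi$ corresponds to the pullback tuple $(\phi^{*}\cO(1),\phi^{*}x_0,\dots,\phi^{*}\xi_n)$.

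Next I would match the two equivalence relations. Since $\cL$ has rank $1|0$, every parity-preserving automorphism $\theta:\cL\to\cL$ is multiplication by an even unit, i.e. an element of $(\cO_S^{*})_0$; and $\al'=\theta\circ\al$ translates under the dictionary into $s_i'=c\,s_i$ and $\sigma_k'=c\,\sigma_k$. For $S=\bP_A^{m|n}$ this is precisely the relation defining the classes in (1), with $c\in\cO(\bP_A^{m|n})^{*}_0$. Composing the two matchings yields the desired bijection, and one checks directly that the forward construction $\phi\mapsto(\phi^{*}\cO(1),\phi^{*}x_0,\dots,\phi^{*}\xi_n)$ and the reverse construction, sending a generating tuple to the endomorphism classified by the associated surjection, are mutually inverse.

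The only genuinely super-specific point---and hence the step I would treat most carefully, though I do not expect it to be a serious obstacle---is the equivalence ``$\cL$ globally generated by $s_0,\dots,\sigma_n$ $\Leftrightarrow$ the associated map $\cO_S^{m+1|n}\lra\cL$ is surjective,'' together with the identification of the admissible rescalings with the even units $(\cO_S^{*})_0$ rather than with all units. Both reduce to correct parity bookkeeping, which is why the statement is, exactly as in the classical case, essentially a reformulation of Proposition \ref{fopts-proj}.
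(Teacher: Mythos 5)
Your proposal is correct and is precisely the standard functor-of-points argument that the paper has in mind: the paper gives no written proof, leaving the statement as an exercise ``completely similar to the ordinary counterpart'' (citing \cite{ccf} Ch.~9), and your specialization of the relative version of Proposition \ref{fopts-proj}(a) to $S=\bP_A^{m|n}$, together with the dictionary between surjections $\cO_S^{m+1|n}\to\cL$ and globally generating tuples and the identification of $\Aut(\cL)$ with even global units, fills in exactly that exercise. The only caveat worth flagging is in the statement itself rather than in your argument: the equivalence relation as printed compares tuples with the same underlying $L$, whereas your dictionary (correctly) works with isomorphism classes of pairs $(\cL,\al)$, so one should read the relation as allowing an isomorphism $L\cong L'$ intertwining the sections up to an even unit.
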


In the ordinary setting we have that a line bundle on
$\bP^{m}_A$ is of the form $\cO(n) \otimes \cL$, 
where $\cL$ is a line bundle on $\uspec A$. This non trivial fact is still
true in supergeometry for $\bP_A^{m|1}$, and it will turn out to be crucial in our
treatment.

\begin{proposition}\label{linebundles-obs}
Every line bundle on $\bP_A^{m|1}$ is isomorphic to $\cO(n) \otimes \cL$, 
where $\cL$ is a line bundle on $\uspec A$.
\end{proposition}

\begin{proof}
A line bundle on $\bP_A^{m|1}$ is determined once we know its transition 
functions, say $g_{ij} \in \cO_{\bP_A^{m|1}}(U_i \cup U_j)^*_0$,
which are even. We then need to prove that any such set of transition 
functions is equivalent, up to a coboundary, to a set of transition 
functions for a line bundle of the form
$\cO(n) \otimes \cL$, for $\cL$ a line
bundle on $\uspec A$. In other words we need to show
$$
h_i|_{U_i \cap U_j} \, g_{ij} \, h_j^{-1}|_{U_i \cap U_j} \, = \,  
(x^i_j)^n, \qquad h_i \in \cO_{\bP_A^{m|1}}(U_i)_0^* 
$$

Notice that
$$
\cO_{\bP_A^{m|1}}(U_p)^*\,=\,(A[x_k^p,\xi^p])_0^*\,=\,(A[\xi^p][x_k^p])_0^*,
\qquad p=i,j. 
$$
Since $\phi_{ij}(\xi^j)=\frac{\xi^i}{x^i_j}$, $\phi_{ij}(x^j_i)=1/x^i_j$ and 
$\phi_{ij}(x^j_k)=\frac{x^i_k}{x^i_j}$ 
($\phi_{ij}$ being the change of chart as in (\ref{changechart})),
we can view the restrictions of the $h_p$'s ($p=i,j$) to $U_i \cap U_j$, 
through this
identification, as both belonging to $(A[\xi^i][x_j^i, (x^i_j)^{-1}])_0^*$.
We now apply the classical result and obtain $h_p'\in (A[\xi^i][x_j^i,
(x^i_j)^{-1}])_0^*$
such that
$$
h_i'g_{ij}(h_j')^{-1} = (x^i_j)^n.
$$
 The $h_p'$'s thus obtained are not yet the sections we want; 
since the odd dimension is one by hypothesis, the most general possible form 
for $h_j'$ is
$$
h_j'=a_0+\al_0\xi^i+\sum_K a_Kx_K^i(x^i_j)^{-|K|}+\sum_L \al_Lx_L^i(x^i_j)^{-|L|}\xi^i+
\sum_k \be_k(x^i_j)^{-k}\xi^i
$$
where $K$ and $L$ are multiindices,  $K=(k_1, \dots ,k_r)$, $k_l\neq j$
($r \in \N)$ and $x_K^i:=x^i_{k_1} \dots x^i_{k_r}$ (similarly for $L$).

In order to eliminate the term $\al_0\xi^i$ which is not well defined on $U_j$, we define:
$$
h_i:=(a_0+\al_0\xi^i)h_i', \qquad h_j:=(a_0^{-1}-a_0^{-2}\al_0\xi^i)h_j'.
$$
and this gives the required sections.
\end{proof} 

Notice that it was absolutely fundamental for our argument
that there is only one odd dimension. 
This calculation will give us key information when we want to 
determine the automorphism supergroup of the projective linear
supergroup.

\section{The Projective Linear Supergroup} 
\label{pgl-sec}

In 
this section we want to define 
the supergroup functor of the projective linear supergroup and 
to show it is representable by producing an embedding of it
as a closed subgroup into the general linear supergroup.

\medskip
Let $\uM_{m|n}(R)$ denote the associative superalgebra of supermatrices of 
order $m|n$ by $m|n$ with entries in a commutative superalgebra $R$. 
More intrinsically, $\uM_{m|n}(R) = \underline{\End}_R(R^{m|n})$.

\begin{definition}
The {\it automorphism supergroup of supermatrices} 
is the supergroup functor $\rAut(\uM_{m|n}): \salg \lra \grps$
\begin{align*}
[\rAut(\uM_{m|n})](R) := \{&f: \uM_{m|n}(R) \to \uM_{m|n}(R) \, | \\
&f \text{ is an $R$-superalgebra automorphism}\}.
\end{align*}
In analogy with the ordinary setting we also will call this
supergroup functor the \textit{projective linear supergroup} and denote
it with $\rPGL(m|n)$.
\end{definition}

Since $\uM_{m|n}(R)$ is itself a free $R$-module of rank $M|N$, 
where $M = m^2 + n^2$ and $N = 2mn$, $\rAut(\uM_{m|n})$ is a subfunctor of 
$\rGL_{M|N}$ in a natural way. We want to prove this is the functor of 
points of a closed subsuperscheme of $\rGL_{M|N}$.
Before proceeding we need a lemma characterizing the morphisms of
the superalgebra of supermatrices.

\begin{lemma} \label{matlemma}

\begin{enumerate}
\item An $R$-linear parity preserving map
$\psi:\uM_{m|n}(R)$ $\lra$ \break $\uM_{m|n}(R)$ is a morphism of
the superalgebra of supermatrices $\uM_{m|n}(R)$ if and only if
\begin{enumerate}
\item $\psi(\id)=\id$; 
\item $\psi(e_{ij})\psi(e_{kl})=\delta_{kj}\psi(e_{il})$,
\end{enumerate}
where $e_{ij}$ are the elementary matrices in $\uM_{m|n}(R)$. 

\item If $R$ is a local superalgebra, all of the automorphisms of
the superalgebra $\uM_{m|n}(R)$ are of the form:
$$
\begin{array}{ccc}
\rM_{m|n}(R) & \lra & \rM_{m|n}(R) \\ \\
(T, X) & \mapsto & TXT^{-1}
\end{array}
$$
for a suitable $T \in \rGL_{m|n}(R)$.

\item $\rAut(\uM_{m|n})$ is a closed subsuperscheme of 
$\rGL_{M|N}=\uspec k[x_{ij,kl}][d_1^{-1},d_2^{-1}]$,
 $M = m^2 + n^2$ and $N = 2mn$, defined by the equations:
\begin{equation} \label{closedsubsch}
\sum_k x_{ij,kk}=\de_{ij}, \qquad 
\sum_s x_{rs,ij}x_{st, kl}=\de_{jk}x_{rt,il},
\end{equation}
 where $\rGL_{M|N}(R)$ is identified with the parity preserving automorphisms of
the free $R$-module $\uM_{m|n}(R)$. 
\end{enumerate}

\end{lemma}

\begin{proof} (1).
If $\psi$ is an $R$-superalgebra endomorphism of 
$\uM_{m|n}(R)$ then the two relations are obviously satisfied and vice-versa.

\medskip\noindent
(2). Now assume $\psi$ is an automorphism of $\rM_{m|n}(R)$, $R$ local, 
which satisfies the relations $(a)$ and $(b)$.
We need to find $T \in \rGL_{m|n}(R)$ such that $\psi(e_{ij})=Te_{ij}T^{-1}$.
This is an application of super Morita theory (see \cite{kwok}), however
we shall recall the main idea to make this proof self-contained.
By $(a)$ and $(b)$ we have that 
$$
\sum \psi(e_{ii})=\id, \quad \psi(e_{ii})^2=\psi(e_{ii}), \quad
\psi(e_{ii})\psi(e_{jj})=0, \, i \neq j
$$
hence we can write
$$
R^{m|n}=\oplus \psi(e_{ii}) R^{m|n}
$$
Since by $(b)$ $\psi(e_{ji})\psi(e_{ii})=\psi(e_{ji})=\psi(e_{jj})\psi(e_{ji})$
we have that $\psi(e_{ji}): \psi(e_{ii}) R^{m|n} \lra \psi(e_{jj}) R^{m|n}$
(recall that $R$ is local so projective implies free). Hence there
exists a basis $\{t_i\}$ of the free module $R^{m|n}$ such that
$$
\psi(e_{ii}) R^{m|n} = {\mathrm{span}}_R \{t_i\}
$$
and $\psi(e_{ji}) t_i=t_j$. Let $T$ be the matrix whose columns are
the $t_i$'s, $T=\sum t_i \otimes e_i^*$, $T^{-1}=\sum e_i \otimes t_i^*$.
It is then immediate to verify $\psi(e_{ij})=T e_{ij} T^{-1}$.

\medskip\noindent
(3). This is immediate from (1). 
\end{proof} 

Let us view the multiplicative algebraic supergroup 
$\bG^{1|0}_m: \salg \lra \grps$ as the following subsupergroup of $\rGL_{m|n}$:
$$
\bG^{1|0}_m(R) = \{aI \, | \,  a \in R^*_0 \} \subset  \rGL_{m|n}(R).
$$
(Here $I$ denotes the identity matrix).

We shall not specify the definition on the arrows whenever
it is clear, as in this case.

\begin{definition}
We define 
the supergroup functor: $\widehat{\rPGL}_{m|n}:\salg \lra \grps$
$$
\widehat{\rPGL}_{m|n}(R)=\rGL_{m|n}(R)/\bG^{1|0}_m(R),
$$
\noindent and we call its sheafification (as customary) $\rGL_{m|n}/\bG^{1|0}$.
\end{definition}

We wish to show that $\rGL_{m|n}/\bG^{1|0}$ is representable and 
coincides with the projective linear supergroup, that is with the automorphism
supergroup of supermatrices.

\begin{definition}
We say that a functor $F: \salg \lra \grps$ 
is \textit{stalky} if
for any superalgebra $R$, the natural map 
\begin{align*}
\varinjlim_{f \notin \mathfrak{p}} F(R_f) \, \lra \, F(R_\mathfrak{p})
\end{align*}
is an isomorphism for any prime ideal $\mathfrak{p} \in R_0$.
\end{definition}

The next two lemmas  
are standard and their proof is the same
as in the ordinary case, see 
\cite{yi}.

\begin{lemma}\label{stalky}
$\rGL_{m|n}/\bG^{1|0}$ and $\rAut(\uM_{m|n})$ are stalky.
\end{lemma}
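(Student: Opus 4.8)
The plan is to prove stalkiness directly from the definitions, since the property is essentially formal and depends only on the way each functor is built out of finitely many ring elements. For a functor $F$ to be stalky means that every $R_\mathfrak{p}$-point is represented by an $R_f$-point for some $f \notin \mathfrak{p}$, and that two $R_f$-points agreeing over $R_\mathfrak{p}$ already agree over some $R_{fg}$ with $g \notin \mathfrak{p}$. Both conditions follow from the observation that $R_\mathfrak{p} = \varinjlim_{f \notin \mathfrak{p}} R_f$ as superalgebras, together with finiteness properties of the data defining $F$.

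First I would treat $\rAut(\uM_{m|n})$. A point of $\rAut(\uM_{m|n})(R)$ is an $R$-superalgebra automorphism $f$ of the free $R$-module $\uM_{m|n}(R)$ of rank $M|N$. Since $\uM_{m|n}(R)$ is free on finitely many generators, such an $f$ is determined by the finitely many matrix entries recording the images of a fixed homogeneous basis. Given an automorphism over $R_\mathfrak{p}$, each of these finitely many entries lives in $R_\mathfrak{p} = \varinjlim_{f \notin \mathfrak{p}} R_f$, so there is a single $f \notin \mathfrak{p}$ over which all the entries are defined; one then checks that the multiplicativity and bijectivity conditions (being an automorphism is witnessed by finitely many polynomial identities in the entries, plus invertibility of the associated matrix in $\rGL_{M|N}$) persist after shrinking $f$ further if necessary, using that $\rGL_{M|N}$ is itself stalky, which is the standard ordinary fact cited from \cite{yi}. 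Injectivity on the level of points is the same finiteness argument applied to differences of entries.

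For $\rPGL_{m|n}$ the situation is only slightly subtler because it is defined as a sheafification. I would use the general principle that the sheafification of a functor does not disturb stalks: the stalk of a (Zariski) sheaf at $\mathfrak{p}$ computes the colimit over $f \notin \mathfrak{p}$, and sheafification is a filtered-colimit-preserving, stalk-preserving operation. Concretely, $\rPGL_{m|n}(R_\mathfrak{p})$ is a quotient of $\rGL_{m|n}(R_\mathfrak{p})$ by $\bG^{1|0}_m(R_\mathfrak{p})$ after sheafifying, and since $\rGL_{m|n}$ is stalky and the scalar subgroup $\bG^{1|0}_m$ is cut out by finitely many conditions on a single invertible even scalar, the quotient inherits stalkiness. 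Thus the verification reduces to the already-established stalkiness of $\rGL$ and $\bG_m$ in the ordinary setting, transported verbatim to the super case since the parity grading plays no essential role in these colimit arguments.

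The main obstacle I expect is purely bookkeeping rather than conceptual: one must be careful that sheafification for $\rPGL_{m|n}$ is taken in a topology compatible with the colimit $R_\mathfrak{p} = \varinjlim_{f \notin \mathfrak{p}} R_f$, so that ``locally in the sheaf topology'' and ``after inverting some $f \notin \mathfrak{p}$'' genuinely coincide at the level of stalks. Once this compatibility is fixed, the proof is a direct translation of the ordinary argument, which is why the statement is correctly described as standard; accordingly I would present it briefly, citing \cite{yi} for the ordinary prototype and remarking only on the finiteness of the defining data and the stalk-preservation of sheafification as the two points that make the super case go through unchanged.
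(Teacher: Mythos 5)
Your proposal is correct and follows exactly the standard route that the paper itself invokes without writing out: the paper's entire ``proof'' of Lemma \ref{stalky} is the remark that the argument is the same as in the ordinary case, citing \cite{yi}. Your two ingredients --- the finiteness of the defining data (finitely many matrix entries and finitely many polynomial identities, so everything descends from $R_\mathfrak{p} = \varinjlim_{f \notin \mathfrak{p}} R_f$ to some $R_f$) and the fact that Zariski sheafification does not change values at local superrings or stalks (since every open cover of the spectrum of a local ring is trivial) --- are precisely the content of that standard argument, so your write-up is a faithful expansion of what the paper leaves implicit.
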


\begin{lemma}\label{localiso}
Let $\mathcal{F}, \mathcal{G}$ be stalky Zariski sheaves 
$\salg \to \grps$, 
$\alpha: \mathcal{F} \to \mathcal{G}$ a morphism. 
If $\alpha_{R}: \mathcal{F}(R) \to \mathcal{G}(R)$ 
is an isomorphism for all local superrings $R$, then $\alpha$ is an 
isomorphism of sheaves.
\end{lemma}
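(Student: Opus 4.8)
The plan is to use the standard principle that a morphism of sheaves is an isomorphism precisely when it induces isomorphisms on all stalks, and to exploit the stalky hypothesis to identify these stalks with the values of $\mathcal{F}$ and $\mathcal{G}$ on local superrings. Concretely, fix a superalgebra $R$; I want to show that $\alpha_R$ is bijective, and the reduction to the local case runs over the topological points of $\uspec R$, which are the prime ideals $\mathfrak{p}$ of the even part $R_0$. For each such $\mathfrak{p}$ the stalk of $\mathcal{F}$ is the colimit $\varinjlim_{f \notin \mathfrak{p}} \mathcal{F}(R_f)$, which by the stalky hypothesis is canonically identified with $\mathcal{F}(R_\mathfrak{p})$, and likewise for $\mathcal{G}$. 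Under these identifications the map induced by $\alpha$ on stalks at $\mathfrak{p}$ is exactly $\alpha_{R_\mathfrak{p}}$. Since $R_\mathfrak{p}$ is a local superring (its even part $(R_0)_\mathfrak{p}$ is local), the hypothesis guarantees that $\alpha_{R_\mathfrak{p}}$ is an isomorphism for every $\mathfrak{p}$.

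It then remains to deduce bijectivity of $\alpha_R$ from bijectivity on all stalks, using the sheaf property of $\mathcal{F}$ and $\mathcal{G}$ along the Zariski cover of $\uspec R$ by the distinguished opens $\uspec R_f$. For injectivity, if $x, y \in \mathcal{F}(R)$ satisfy $\alpha_R(x) = \alpha_R(y)$, then their germs at each $\mathfrak{p}$ coincide because $\alpha_{R_\mathfrak{p}}$ is injective; hence $x$ and $y$ restrict to the same section over some $R_{f_\mathfrak{p}}$ with $f_\mathfrak{p} \notin \mathfrak{p}$, and as $\mathfrak{p}$ ranges over $\spec R_0$ these opens cover $\uspec R$, so the sheaf axiom forces $x = y$. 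For surjectivity, given $z \in \mathcal{G}(R)$, the surjectivity of each $\alpha_{R_\mathfrak{p}}$ together with stalkiness produces, for every $\mathfrak{p}$, a distinguished open $\uspec R_{f_\mathfrak{p}} \ni \mathfrak{p}$ and a section $x^{(\mathfrak{p})} \in \mathcal{F}(R_{f_\mathfrak{p}})$ with $\alpha_{R_{f_\mathfrak{p}}}(x^{(\mathfrak{p})}) = z|_{R_{f_\mathfrak{p}}}$, after possibly replacing $f_\mathfrak{p}$ by a further multiple. These local preimages agree on overlaps by the local injectivity just established, so the sheaf axiom glues them to a global $x \in \mathcal{F}(R)$ with $\alpha_R(x) = z$.

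The main obstacle I expect is bookkeeping in the surjectivity step rather than any conceptual difficulty: passing from an equality of germs in the stalk to an honest equality of sections over a small enough distinguished open, and verifying that the resulting local preimages are compatible on the intersections $\uspec R_{f_\mathfrak{p}} \cap \uspec R_{f_\mathfrak{q}}$ so that they can be glued. Both points are handled uniformly by combining the colimit description of the stalk (so that equality in the colimit means equality after one further localization) with the local injectivity proven just before; once these compatibilities hold, the sheaf property of $\mathcal{F}$ performs the gluing. The whole argument is formally identical to the ordinary case recorded in \cite{yi}, the only super-specific point being that the underlying topological space of $\uspec R$ is governed by $R_0$, so that ``local superring'' correctly refers to localizing $R$ at a prime of its even part.
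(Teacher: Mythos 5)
Your argument is correct and is exactly the standard stalk-wise argument that the paper invokes by reference (it gives no proof, citing only that the reasoning is the same as in the ordinary case in \cite{yi}): identify stalks at primes of $R_0$ with values on the localizations $R_\mathfrak{p}$ via stalkiness, then deduce injectivity and surjectivity of $\alpha_R$ from the sheaf axiom over the cover by distinguished opens. Your handling of the two bookkeeping points (equality of germs implying equality after one further localization, and compatibility of local preimages on overlaps via the already-proved injectivity) is sound, so nothing is missing.
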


\begin{proposition}
The supergroup functor  $\rGL_{m|n}/\bG^{1|0}$ is representable and
is realized as the closed subsupergroup $\rAut(\uM_{m|n})$ of $\rGL_{M|N}$ for 
$M=m^2+n^2$ and $N=2mn$.
\end{proposition}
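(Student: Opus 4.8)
The plan is to exhibit an explicit isomorphism of sheaves $\rPGL_{m|n} \cong \rAut(\uM_{m|n})$; since Lemma \ref{matlemma}(3) already realizes $\rAut(\uM_{m|n})$ as a closed subsupergroup of $\rGL_{M|N}$, this will simultaneously establish representability and the asserted embedding. The morphism arises from the adjoint (conjugation) action: for every $R \in \salg$ and every $T \in \rGL_{m|n}(R)$, the assignment $X \mapsto TXT^{-1}$ is a parity-preserving $R$-superalgebra automorphism of $\uM_{m|n}(R)$, and this is manifestly natural in $R$. First I would record that a scalar $aI$ with $a \in R^*_0$ acts trivially by conjugation, so that $\bG^{1|0}_m(R)$ lies in the kernel of $T \mapsto (X \mapsto TXT^{-1})$; the conjugation map therefore factors through the presheaf quotient, yielding a morphism of supergroup functors $\widehat{\alpha}: \widehat{\rPGL}_{m|n} \to \rAut(\uM_{m|n})$. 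Because $\rAut(\uM_{m|n})$ is representable, hence a Zariski sheaf, the universal property of sheafification promotes $\widehat{\alpha}$ to a morphism $\alpha: \rPGL_{m|n} \to \rAut(\uM_{m|n})$.

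To prove that $\alpha$ is an isomorphism I would invoke Lemma \ref{localiso}: both functors are stalky by Lemma \ref{stalky}, so it suffices to check that $\alpha_R$ is bijective for every local superring $R$. The one genuinely delicate point is the sheaf-theoretic reduction to local rings, and this is where the bookkeeping must be done carefully: I would observe that the presheaf quotient $\widehat{\rPGL}_{m|n}$ is itself stalky, since filtered colimits commute with the quotient by $\bG^{1|0}_m$, so that sheafification does not alter its value on a local ring. Concretely, for $R$ local the value coincides with the stalk at its maximal ideal and sheafification preserves stalks, whence $\rPGL_{m|n}(R) = \widehat{\rPGL}_{m|n}(R) = \rGL_{m|n}(R)/\bG^{1|0}_m(R)$. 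Under this identification $\alpha_R$ becomes the map induced by conjugation $\rGL_{m|n}(R)/\bG^{1|0}_m(R) \to \rAut(\uM_{m|n})(R)$.

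It then remains only to verify bijectivity of this last map for $R$ local, which is where the substantive content already proved enters. Surjectivity is precisely Lemma \ref{matlemma}(2), guaranteeing that over a local superring every automorphism of $\uM_{m|n}(R)$ is inner. For injectivity I would compute the kernel of the conjugation map as the centralizer of $\uM_{m|n}(R)$ inside $\rGL_{m|n}(R)$: an even invertible $T$ commuting with all elementary matrices $e_{ij}$ — including the off-diagonal blocks mixing the even and odd coordinates, which is what rules out distinct scalars on the two blocks — is forced to be a scalar $aI$, so the kernel is exactly $\bG^{1|0}_m(R)$. Hence $\alpha_R$ is bijective for all local $R$, Lemma \ref{localiso} yields that $\alpha$ is an isomorphism of sheaves, and $\rPGL_{m|n} \cong \rAut(\uM_{m|n})$ is the claimed closed subsupergroup of $\rGL_{M|N}$. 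The main obstacle is thus not the algebra, which is packaged in Lemma \ref{matlemma}, but the verification that passing to the sheafification leaves the values on local rings untouched.
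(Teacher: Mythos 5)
Your proposal is correct and follows essentially the same route as the paper: define the conjugation morphism, factor through the scalars and the sheafification, and reduce via the stalky-sheaf lemmas to checking bijectivity on local superrings, where Lemma \ref{matlemma}(2) gives surjectivity. You are in fact slightly more careful than the paper on two points it leaves implicit --- that sheafification does not change the value of $\widehat{\rPGL}_{m|n}$ on a local superring, and that injectivity follows because the centralizer of $\uM_{m|n}(R)$ in $\rGL_{m|n}(R)$ is exactly the scalars $\bG^{1|0}_m(R)$ --- but these are refinements of, not departures from, the paper's argument.
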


\begin{proof}
We need to establish an isomorphism of sheaves between 
 $\rGL_{m|n}/\bG^{1|0}$ and a closed subsupergroup of $\rGL_{M|N}$. 
We will first give a morphism
of sheaves and then show it is an isomorphism on local superalgebras;
since $\rGL_{m|n}/\bG^{1|0}$ is a stalky sheaf, this will be
enough.
We start by giving a morphism of presheaves 
$\widehat{\rPGL}_{m|n}$ and  $\rGL_{M|N}$; since $\rGL_{M|N}$ is a sheaf
then such a morphism will factor through the sheafification of
$\widehat{\rPGL}_{m|n}$ thus giving us a sheaf morphism.

\medskip
Consider the action of $\rGL_{M|N}$ on supermatrices
$\uM_{m|n}$, where $M=m^2+n^2$, $N=2mn$:
$$
\begin{array}{cccc}
\phi: & \rGL_{m|n}(R) \times \uM_{m|n}(R) & \lra & \uM_{m|n}(R) \\ \\
& (T, X) & \mapsto & TXT^{-1}
\end{array}
$$
This clearly factors through $\bG^{1|0}_m(R)$ hence gives a well defined action
$\rho$ of  $\widehat{\rPGL}_{m|n}$ and then in turn of $\rGL_{m|n}/\bG^{1|0}$ (see
comments at the beginning of the proof). 
Since $X \mapsto TXT^{-1}$, $T \in (\rGL_{m|n}/\bG^{1|0})(R)$ 
is a parity preserving $R$-superalgebra morphism, it
is immediate to verify we have a morphism of sheaves 
$$
\rGL_{m|n}/\bG^{1|0} \to \rAut(\uM_{m|n}).
$$
By the first part of Lemma \ref{matlemma}, $\rAut(\uM_{m|n})$ 
is represented by the closed subsuperscheme $H$ of 
$\rGL_{M|N}=\uspec k[x_{ij,kl}][d_1^{-1},d_2^{-1}]$ defined by the equations:
\begin{equation} \label{closedsubsch}
\sum_k x_{ij,kk}=\de_{ij}, \qquad 
\sum_s x_{rs,ij}x_{st, kl}=\de_{jk}x_{rt,il}
\end{equation}
\noindent  (Here $d_i$ denotes as usual the determinants of the 
diagonal blocks of 
indeterminates).
We want to show that the group homomorphism 
$(\rGL_{m|n}/\bG^{1|0})(R) \to [\rAut(\uM_{m|n})](R)$ is an isomorphism for $R$ local. 
$\psi \in \rGL_{M|N}(R)$ 
belongs to $H(R)$ if and only
its entries $\psi(e_{ij})_{kl}$ satisfy the above relations (\ref{closedsubsch})
(where in our convention $x_{ij,kl}$ corresponds to  $\psi(e_{ij})_{kl}$).
Hence by Lemma \ref{matlemma} we have the result for $R$ local. 
By Lemmas \ref{stalky} and \ref{localiso}, it is true for any superalgebra 
$R$ and this concludes the proof.
\end{proof}

\begin{remark}
The projective linear supergroup may also be obtained through the
Chevalley supergroup recipe as detailed in \cite{fg1, fg2, fg3}). It
corresponds to the choice of the adjoint action of the Lie superalgebra
${\mathfrak {sl}}_{m|n}$. In fact one may readily check that the Lie superalgebra
of $\rPGL_{m|n}$ is  indeed $\fsl_{m|n}$ and 
$(\rPGL_{m|n})_0=\rPGL_{m}\times \rPGL_n \times k^\times$. 
\end{remark}

\section{The automorphisms of the projective superspace}
\label{aut-sec}

We want to define the automorphism supergroup of the superscheme
$\bP^{m|n}$. 

\begin{definition}
We define the supergroup functor of \textit{automorphisms of the
projective superspace}:
$$
\Aut(\bP^{m|n})(A) := 
\Aut_A(\bP^{m|n} \times \uspec A)=
\Aut_A \bP^{m|n}_A, \quad A \in \salg.
$$
$\Aut(\bP^{m|n})$ is defined in an obvious way on the morphisms.
\end{definition}

The equality in the definition is straightforward noticing that
we can identify the $T$-points of $\bP^{m|n} \times \uspec A$ and 
of $\bP^{m|n}_A$. In fact
a $T$-point of $\bP^{m|n} \times \uspec A$ is a morphism $\phi: A \lra T$
and a morphism of $A$-modules via $\phi$, $L \lra T^{m|n}$. This is
exactly an element of  $\bP^{m|n}_A(T)$ and vice-versa.

\medskip
An automorphism $\psi \in \Aut_A \bP^{m|n}_A$ is
a family of automorphisms $\psi_T$ for all $T \in \salg_A$, which is functorial in $T$. 
$\psi_T: \bP^{m|n}_A(T) \lra \bP^{m|n}_A(T)$ must assign to a $T$-point
of $\bP^{m|n}_A(T)$, that is a morphism
$\al:L \lra T^{m|n}$, another morphism
$\al':L' \lra T^{m|n}$, where $L$, $L'$ are projective
rank $1|0$ $T$-modules, 
where the morphisms are interpreted as $A$-module morphisms. 
Similarly for the other characterizations of $T$-points as in Prop.
\ref{fopts-proj}.


We are now ready to relate the supergroup scheme $\rPGL_{m|n}$
with the automorphisms of $\bP^{m-1|n}$.

\begin{proposition}\label{pgl-embed} 
There is an embedding of supergroup functors  
$\rPGL_{m|n} \hookrightarrow \Aut(\bP^{m-1|n})$.
\end{proposition}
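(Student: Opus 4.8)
The plan is to let $\rGL_{m|n}$ act on $\bP^{m-1|n}$ through its tautological action on the free module of rank $m|n$, to show this action descends to $\rPGL_{m|n}$, and then to verify faithfulness by testing against suitably chosen $T$-points. First I would construct the action. Given $A \in \salg$, $T \in \salg_A$, and $g \in \rGL_{m|n}(T)$, the map $g$ is an automorphism of $T^{m|n}$; post-composing an embedding $\al: L \hookrightarrow T^{m|n}$ with $g$ yields another embedding $g\al : L \hookrightarrow T^{m|n}$, and this assignment is functorial in $T$ and compatible with the $A$-structure. Using characterization (2)/(b) of Proposition \ref{fopts-proj}, this gives a group homomorphism $\rGL_{m|n}(A) \to \Aut_A \bP^{m-1|n}_A$. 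When $g = aI$ is a scalar, $g\al$ and $\al$ have the same image, hence define the same point of $\bP^{m-1|n}_A$; so the action is trivial on $\bG^{1|0}_m$ and factors through $\widehat{\rPGL}_{m|n}(A)$. Since $\Aut(\bP^{m-1|n})$ is a Zariski sheaf (automorphisms satisfy descent), this morphism of presheaves factors through the sheafification $\rPGL_{m|n}$.

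Next I would prove that the resulting morphism $\rPGL_{m|n} \to \Aut(\bP^{m-1|n})$ is a monomorphism. Because $\rPGL_{m|n}$ is a stalky Zariski sheaf (Lemma \ref{stalky}), it suffices to argue locally: passing to a cover $\{\uspec R_i\}$ that trivializes a given global section $x$ as $x|_{R_i} = [g_i]$ with $g_i \in \rGL_{m|n}(R_i)$, a section mapping to the identity is itself trivial as soon as each local representative $g_i$ is forced to be a scalar (then $x|_{R_i} = 1$, and the sheaf property gives $x = 1$). So the crux is the faithfulness statement: if $g \in \rGL_{m|n}(R)$ acts trivially on $\bP^{m-1|n}_R(T)$ for every $T \in \salg_R$, then $g = \lambda I$ for some $\lambda \in R_0^\times$. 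To see this I would feed $g$ the even coordinate points $[e_i]$, which force $g e_i = \lambda_i e_i$ and hence annihilate every off-diagonal entry in the even columns; the even all-ones point $[\,\sum_i e_i\,]$ then forces all $\lambda_i$ to a common value $\mu$.

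The hard part — and the essential departure from the classical argument — is detecting the odd directions, since an odd basis vector $e_j$ spans a submodule of rank $0|1$ and so is not itself a point of $\bP^{m-1|n}$. The plan here is to test $g$ against the mixed points $[\,e_i + \theta e_j\,]$ defined over the base change $T = R[\theta]$, with $\theta$ odd and $\theta^2 = 0$: the generator $e_i + \theta e_j$ is even, so it does span a genuine rank $1|0$ submodule, and invariance under $g$ forces, upon comparing coefficients of each $e_l$, the vanishing of all off-diagonal entries in the odd column $j$ together with $g_{jj} = \mu$. Ranging over all $i$ and $j$ then yields $g = \mu I$, completing the faithfulness step and hence the embedding. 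I expect the coefficient bookkeeping over $R[\theta]$ to be the only genuinely delicate point, as it is exactly where the parity of the odd entries and the nilpotence of $\theta$ interact; everything else mirrors the ordinary proof.
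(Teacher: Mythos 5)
Your construction of the morphism is the same as the paper's (the paper uses the quotient description of Proposition \ref{fopts-proj}(1) and pre-composition, you use the dual sub-bundle description (2) and post-composition, which is immaterial), and since the paper simply declares the injectivity ``clear,'' your approach is essentially identical, with your test-point argument supplying the detail the paper omits. That faithfulness computation is correct; note only that killing the off-diagonal entries of an odd column $j$ requires two distinct even indices $i, i'$, i.e.\ $m \ge 2$ --- a restriction that is in fact necessary, since for $m=1$ the unipotent matrix $\begin{pmatrix} 1 & \gamma \\ 0 & 1 \end{pmatrix}$ acts trivially on $\bP^{0|1}$ and the map fails to be injective.
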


\begin{proof}
We first establish a morphism $\phi': \rGL_{m|n} \lra \Aut(\bP^{m-1|n})$.
If $X \in \rGL_{m|n}(A)$ and $\al \in \bP^{m-1|n}_A(T)=\{T^{m|n} \lra L\}\,/\,
\sim$,
$\psi: A \lra T$ 
we define
$$
\phi'(X)=\al \circ \rGL_{m|n}(\psi)(X)
$$
Clearly $\phi'$ factors through $\bG_m(A)$.
Since $\Aut(\bP^{m-1|1})$ is a sheaf, we have defined a morphism
$$
\phi:  \rPGL_{m|n} \lra \Aut(\bP^{m-1|n})
$$
The injectivity is clear. 
\end{proof}

\begin{remark} In general we
cannot expect to get an isomorphism between $\rPGL_{m|n}$ and
$\Aut(\bP^{m-1|n})$ for $n>1$ and this
is because of the peculiarity of the odd elements. Let us see
this in a simple example: $\bP^{1|2}$. Consider the morphism 
$\phi \in \bP_A^{1|2}$ given on the affine
pieces $U_0=\uspec A[u,\mu_1,\mu_2]$ and $U_1= A[v,\nu_1,\nu_2]$ by  
$$
\phi|_{U_0}(u,\mu_1,\mu_2)=(u+\mu_1\mu_2, \mu_1,\mu_2), \qquad
\phi|_{U_1}(v,\nu_1,\nu_2)=(v-\nu_1\nu_2, \nu_1,\nu_2)
$$
 
As $\phi$ is invertible, $\phi \in \Aut(\bP^{m|n})(A)$,
but is not obtained through an element of
$\PGL_{2|2}(A)$. In fact the coefficient in $\phi|_{U_0}$ of 
$\mu_1\mu_2 $ 
in an automorphism induced by a $\PGL_{2|2}(A)$  
transformation must be a nilpotent. Hence $\phi \not\in \PGL_{2|2}(A)$.
\end{remark}

We now want to show that we have an isomorphism between the projective linear
supergroup and the automorphism of the super projective when
$n=1$. The argument we give follows along the lines 
of the calculation of $\rAut(\bP^n)$ given in  \cite{ha} Ch. 2, Sec. 7. 

\begin{proposition}\label{pgl-iso} 
We have an isomorphism of supergroup functors:
$$
\PGL_{m+1|1} \cong \Aut(\bP^{m|1} )
$$
In particular, $\Aut(\bP^{m|1} )$ is a supergroup scheme.
\end{proposition}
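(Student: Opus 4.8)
The plan is to show that the embedding $\rPGL_{m+1|1}\hookrightarrow\Aut(\bP^{m|1})$ furnished by Proposition \ref{pgl-embed} (taking $n=1$ and replacing $m$ by $m+1$) is in fact surjective, hence an isomorphism of supergroup functors; representability of $\Aut(\bP^{m|1})$ then follows, since $\rPGL_{m+1|1}$ is already known to be a supergroup scheme. Since both functors are stalky Zariski sheaves (Lemma \ref{stalky} and the argument behind it), by Lemma \ref{localiso} it suffices to prove that the map $\rPGL_{m+1|1}(A)\lra\Aut(\bP^{m|1})(A)$ is bijective for every local superalgebra $A$. Injectivity is already contained in Proposition \ref{pgl-embed}, so the entire content of the proof is surjectivity over a local base $A$.

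So let $\psi\in\Aut(\bP^{m|1})(A)=\Aut_A\bP^{m|1}_A$. Viewing $\psi$ as an endomorphism, Proposition \ref{linebundles-end} attaches to it an equivalence class of tuples $(L,s_0,\dots,s_m,\sigma)$, where $L=\psi^*\cO(1)$ is a line bundle on $\bP^{m|1}_A$ globally generated by the even sections $s_0,\dots,s_m$ and the odd section $\sigma$. The decisive step is now Observation \ref{linebundles-obs}, available precisely because there is a single odd coordinate: it forces $L\cong\cO(d)\otimes\cL$ for some $d\in\Z$ and some line bundle $\cL$ on $\uspec A$. I would pin down $d=1$ by a composition argument: writing $(L',\dots)$ for the tuple attached to $\psi^{-1}$, with $L'\cong\cO(d')\otimes\cL'$, functoriality of pullback applied to $\psi^{-1}\circ\psi=\id$ gives $\cO(dd')\otimes(\text{base bundle})\cong\cO(1)$, whence $dd'=1$; since global generation of $L$ by the $s_i,\sigma$ forbids $d\le 0$ (degree $0$ would force $\psi$ constant), we conclude $d=d'=1$.

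With $L\cong\cO(1)\otimes\cL$ established, I would identify the space of global sections. Over the local base $A$ the bundle $\cL$ is free, so the even--odd structure of $H^0(\bP^{m|1}_A,\cO(1))$ as the free $A$-module with even basis $x_0,\dots,x_m$ and odd basis $\xi$ yields $H^0(\cO(1)\otimes\cL)\cong A^{m+1|1}$. Expanding the generating sections $s_0,\dots,s_m,\sigma$ in this basis produces a parity-preserving matrix $T\in\uM_{m+1|1}(A)$, and the requirement that $\psi$ be an automorphism (equivalently, that $s_0,\dots,s_m,\sigma$ form a basis of $H^0$) is exactly the statement that $T\in\rGL_{m+1|1}(A)$. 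The residual ambiguity in the tuple, namely rescaling all sections by a unit $c\in\cO(\bP^{m|1}_A)^*_0=A^*_0$, corresponds precisely to the $\bG^{1|0}_m(A)$-action, so the class of $T$ is a well-defined element of $\PGL_{m+1|1}(A)$, and checking that $T\mapsto\psi$ inverts the embedding of Proposition \ref{pgl-embed} is then a direct unwinding of definitions. I expect the real work to be concentrated in the degree computation and in the clean identification of the section matrix with an element of $\rGL_{m+1|1}$: everything there hinges on Observation \ref{linebundles-obs}, and it is exactly this ingredient that fails for $n>1$, consistent with the fact that $\rPGL_{m+1|n}$ is strictly smaller than $\Aut(\bP^{m|n})$ in that range.
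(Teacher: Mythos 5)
Your proposal is correct and follows essentially the same route as the paper: use the embedding from Proposition \ref{pgl-embed}, apply Observation \ref{linebundles-obs} to write $f^*\cO_A(1)\cong\cO(d)\otimes\cL$, deduce $d=1$ from the composition with $f^{-1}$ and the absence of global sections in degree $-1$, and then read off a class in $\rPGL_{m+1|1}(A)$ from the induced map on global sections of $\cO(1)$, well defined up to the unit ambiguity. The only cosmetic difference is that you localize at the outset (reducing to local $A$ via stalkiness) whereas the paper trivializes $\cL_f,\cL_g$ on a cover partway through; the substance is identical.
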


\begin{proof}
Proposition \ref{pgl-embed} gives us an embedding of supergroup
functors $\PGL_{m+1|1}$ $\hookrightarrow$ $\Aut(\bP^{m|1})$. Now
let $f \in \Aut(\bP^{m|1}_A)$ and let $g$ be its inverse.
We want to show $f \in \PGL_{m+1|1}(A)$.
The automorphism $f$ induces the two line bundle morphisms 
$f^*\cO_A(1) \lra \cO_A(1)$ and  $g^*\cO_A(1) \lra \cO_A(1)$,
where $\cO_A(1):= p_1^*(\cO(1))$,
$p_1:\bP_A^{m|1} \lra \bP^{m|1}$ being the natural projection.
By Prop. \ref{linebundles-obs}, we know that $f^*\cO_A(1)=\cO(k) \otimes \cL_f$
and $g^*\cO_A(1)=\cO(l) \otimes \cL_g$. Let us  choose a suitable open
cover of $A$ in which both $\cL_f$ and $\cL_g$ are trivial. By a common 
abuse of notation we shall still write $A$ to denote 
the ring of global sections of an element of the open
cover, so we in fact are replacing $A$ with its localization. 
With such a choice we have
$f^*\cO_A(1)\cong\cO_A(k)$, $g^*\cO_A(1)\cong\cO_A(l)$. Since $f$ and $g$
are mutually inverse, we have:
$$
\cO_A(1)=(f^*\circ g^*)(\cO_A(1))=f^*( g^*(\cO_A(1)))=
f^*(\cO_A(l))=\cO_A(kl).
$$
Hence $kl=1$, whence $k = l = 1$, because for $k=l=-1$ 
we do not have global sections.

\medskip
So $f^*(\cO(1))\cong \cO(1)$, and choosing an isomorphism $F: f^*(\mathcal{O}(1)) \to \mathcal{O}(1)$ yields an isomorphism of
the global sections $\Gamma(\bP^m, f^*\cO_A(1))\cong \Gamma(\bP^m, \cO_A(1))$
By composing such an isomorphism with the natural isomorphism
$$
f^*: \Gamma(\bP^m, \cO_A(1)) \lra \Gamma(\bP^m, f^*\cO_A(1))
$$ 
we obtain
an $A$-linear automorphism
$$
T_F: \Gamma(\bP^m, \cO_A(1)) \lra \Gamma(\bP^m, \cO_A(1))
$$

\noindent and identifying $\Gamma(\bP^m, \cO_A(1))$ with $A^{m+1|1}$ we see that $T_F \in \rGL_{m+1|1}(A)$. However, $T_F$ depends on $F$. Suppose $G: f^*(\mathcal{O}(1)) \to \mathcal{O}(1)$ is another isomorphism, then $F^{-1} \circ G$ is an automorphism of $\mathcal{O}(1)$. Since $\underline{\Hom}(L, L) = L^* \otimes L = \mathcal{O}$ for any line bundle $L$, we see that an automorphism of $\mathcal{O}(1)$ is the same thing as an invertible even function on $\bP^{m|1}_A$,  and $F$ and $G$ differ by composing with multiplication by such a function. 

Therefore $f$ determines $T_F$ only up to multiplication by an 
invertible even function, i.e. $f$ uniquely determines an element 
$T := [T_F]$ of $\rPGL_{m+1|1}(A)$. 

Now 
in suitable coordinates we have that $T$ induces 
(up to scalar multiplication) an automorphism of the
$\Z$-graded superalgebra $A[z_0, \dots , z_m, \zeta]$. 
We leave to the reader the check that $\phi(T)$ is indeed $f$.
\end{proof}

\section{The SUSY-preserving automorphisms of $\bP^{1|1}_k$}  
\label{susy-sec}

In this section we want to consider those automorphisms of $\bP^{1|1}_k$ 
which preserve its unique (up to isomorphism) SUSY structure. 
For all of the standard notation of supergeometry refer to
\cite{ccf}.

\medskip
Let $k$ be our ground field, $\mathrm{char}(k)\neq 2$, $k$ algebraically
closed. All algebraic supergroups discussed below will be algebraic 
supergroups over $k$. 

\medskip
We recall that if, $X$ is a smooth algebraic supervariety over $k$ 
of dimension $1|1$, we define
a {\it SUSY structure} on $X$ as a $0|1$ distribution $\cD$ on 
$X$ such that the Frobenius map
\begin{align*}
\cD \otimes \cD &\lra TX/\cD\\
Y \otimes Z &\mapsto [Y,Z] \text{ mod } \mathcal{D}
\end{align*}

\noindent is an isomorphism (see, for example, \cite{ma1} for the definition of SUSY-structure in the complex analytic case). 
If $X \to S$ is a smooth family of algebraic 
supervarieties of relative dimension $1|1$ over an algebraic $k$-supervariety 
$S$, then the notion of relative SUSY structure may be defined in the 
analogous way, as a relative distribution in the relative tangent sheaf $TX/S$. 
In this case we say that $X \to S$ is a \textit{relative SUSY family}.

Our discussion is based on \cite{witten}.

\medskip
Let us start by interpreting $\bP^{1|1}_k$  
as a homogeneous superspace. Let $\underline{k}^{2|1}=(k^2, 
\cO_{\underline{k}^{2|1}})$  denote the 
affine superspace canonically associated to the $k$-super vector space 
$k^{2|1}$. Let us consider the action of the algebraic group 
$\underline{k}^\times$ on $\underline{k}^{2|1} \setminus \{0\}$, 
given in the functor of points notation by: 

$$
t \cdot (z_0,z_1, \zeta)=(tz_0,tz_1, t\zeta)
$$
Consider the projection (as topological map): 
$$
\pi:k^2\setminus \{0\}\lra
k^2 \setminus \{0\}/\, k^\times \cong \bP^1
$$
Define the sheaf on the topological space $\bP^1_k$ consisting of the 
$\underline{k}^\times$-invariant sections:
$$
\cF(U):=\cO_{\underline{k}^{2|1}}(\pi^{-1}(U)))^{\underline{k}^\times}
$$
One can readily check that  $(\bP^1_k, \cF)$ is the superscheme
$\bP^{1|1}_k$ as defined in Sec. \ref{pmn-sec}. 

\medskip
Let $z_0$, $z_1$, $\zeta$ be global coordinates on  
$\underline{k}^{2|1}$.
We now consider the Euler vector field 
$E= z_0 \partial_{z_0}+z_1 \partial_{z_1}+ \zeta \partial_{\zeta}$,
which represents (in the chosen coordinates) the infinitesimal
generator for the $\underline{k}^\times$ action on 
$\underline{k}^{2|1} \setminus \{0\}$.
Since $E$ is everywhere nonsingular, it generates a trivial $1|0$
line bundle.
As in the classical case, we have the Euler exact sequence of vector 
bundles on $\bP^{1|1}_k$: 
\begin{equation} \label{eulerseq}
0 \to \mathcal{O}^{1|0} \xrightarrow{i} 
\mathcal{O}(1) \otimes \Der(S) \xrightarrow{j} T\mathbb{P}^{1|1}_k \to 0
\end{equation}
where $i$ is the inclusion of the trivial $1|0$ line bundle 
$\langle E \rangle$ with global basis the Euler vector field. Here $\Der(S)$ is the $k$-super vector space of $k$-linear derivations on $S := 
\underline{\mathrm{Sym}}((k^{2|1})^*)$; it has as basis the derivations 
$\partial_{z_i}, \partial_\zeta$. Thus $\mathcal{O}(1) \otimes \Der(S)$ is the sheaf 
whose sections on $U$ are the linear vector fields on $\pi^{-1}(U)$. Any local section of $\mathcal{O}(1) \otimes \Der(S)$ induces a corresponding local $k$-linear derivation on $\mathcal{O}_{\mathbb{P}^{1|1}_k}$ by restricting it to act on $\underline{k}^\times$-invariant functions; this defines $j$. Injectivity of $i$ and the inclusion $\im(i) \subseteq \Ker(j)$ follow from the fact that $E$ is nonsingular and the infinitesimal generator for the $\underline{k}^\times$-action; a standard calculation in the usual affine cells shows that $\Ker(j) \subseteq \im(i)$ and that $j$ is surjective. Note that the sequence continues to remain exact on $\bP^{1|1}_A$ after base change to any affine $k$-supervariety $\underline{\spec}(A)$, with $T\bP^{1|1}_k$ replaced by the relative tangent bundle $T\bP^{1|1}_A/\spec(A)$. We will denote the $A$-superalgebra $S \otimes_k A$ by $S_A$. 

\medskip
We now come to the SUSY structure. 

\begin{definition}
Let $(X \to S, \mathcal{D})$ be a relative SUSY family. An $S$-auto\-morphism $f: X \to X$ is {\it SUSY structure-preserving} (or simply {\it SUSY-pre\-serving}) if and only if $(df_p)(\mathcal{D}_p) = \mathcal{D}_{f(p)}$ for any $p \in X$.
\end{definition}

We will consider SUSY structures given by sections of $\mathcal{O}_A(1) \otimes \Omega_{S/A}$. Here $\Omega_{S/A}$ denotes the $A$-module of K{\"a}hler differentials on $S_A$, i.e. the $A$-dual to $\Der(S_A)$; it has as basis the differentials $dz_i, d\zeta$. When we speak of the kernel of a section $\omega$ of $\mathcal{O}_A(1) \otimes \Omega_{S/A}$, we mean the kernel of $\omega$ when $\omega$ is interpreted as a morphism of sheaves of $\mathcal{O}_{\bP^{1|1}_A}$-modules 
from $\cO_A(1) \otimes \Der(S_A) \to \cO_A(2)$.

\begin{proposition}
Let $s:=z_1 \, dz_0 - z_0 \, dz_1 - \zeta \, d \zeta$. Then the image of $\Ker(s)$ under $j$ is a SUSY structure on $\bP^{1|1}_k$.
\end{proposition}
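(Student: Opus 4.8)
The plan is to work chart by chart, using the Euler sequence \eqref{eulerseq} to reduce everything to an explicit computation in the two affine cells $U_0 = \{z_0\neq 0\}$ and $U_1 = \{z_1\neq 0\}$. The first thing I would check is that $s$ annihilates the Euler field, i.e. that $\im(i) = \langle E\rangle \subseteq \Ker(s)$, so that $j(\Ker(s))$ is genuinely the quotient $\Ker(s)/\langle E\rangle$ and the statement makes sense. Interpreting $s$ as the contraction map $\mathcal{O}(1)\otimes\Der(S)\to\mathcal{O}(2)$ against the duality pairing $\langle dz_i,\partial_{z_j}\rangle = \delta_{ij}$, $\langle d\zeta,\partial_\zeta\rangle = 1$, a direct computation gives $s(E) = z_1 z_0 - z_0 z_1 - \zeta^2 = 0$, the last term vanishing because $\zeta$ is odd. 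This is the algebraic reason the form $s$ is chosen with exactly these coefficients.

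Next I would compute $\Ker(s)$ locally. On $U_0$ trivialize $\mathcal{O}(1)$ by $z_0$, so that $\mathcal{O}(1)\otimes\Der(S)$ is the free $\mathcal{O}_{U_0}$-module on the even sections $z_0\partial_{z_0}, z_0\partial_{z_1}$ and the odd section $z_0\partial_\zeta$; with $u = z_1/z_0$ and $\mu = \zeta/z_0$, the map $s$ (after dividing the target by $z_0^2$) reads $f\,z_0\partial_{z_0} + g\,z_0\partial_{z_1} + h\,z_0\partial_\zeta \mapsto fu - g \pm h\mu$. Hence $\Ker(s)|_{U_0}$ is free of rank $1|1$, generated by $W_1 = z_0\partial_{z_0} + u\,z_0\partial_{z_1}$ and $W_2 = z_0\partial_\zeta \pm \mu\,z_0\partial_{z_1}$, and one checks $E = W_1 + \mu W_2$. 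I would then push forward by $j$, using the readily computed values $j(z_0\partial_{z_0}) = -u\partial_u - \mu\partial_\mu$, $j(z_0\partial_{z_1}) = \partial_u$, $j(z_0\partial_\zeta) = \partial_\mu$. This gives $j(W_2) = \partial_\mu \pm \mu\partial_u =: D$ and $j(W_1) = -\mu\partial_\mu = -\mu D$ (using $\mu^2 = 0$), so $j(\Ker(s))|_{U_0} = \langle D\rangle$ is free of rank $0|1$ and a direct summand of $T\bP^{1|1}_k|_{U_0}$, since the $\partial_\mu$-coefficient of $D$ is a unit. An identical computation on $U_1$ (the coefficients of $s$ are antisymmetric in $z_0, z_1$) gives the same conclusion, so $\mathcal{D} := j(\Ker(s))$ is a globally defined $0|1$ distribution.

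Finally I would verify the Frobenius nondegeneracy, which is where $\mathrm{char}\,k\neq 2$ enters. Since $\mathcal{D}|_{U_0} = \langle D\rangle$ with $D$ odd, the Frobenius form sends $D\otimes D$ to $[D,D]\bmod\mathcal{D} = 2D^2\bmod\mathcal{D}$, and a direct bracket computation gives $D^2 = (\partial_\mu \pm \mu\partial_u)^2 = \pm\partial_u$, whence $[D,D] = \pm 2\,\partial_u$. This is a generator of the rank $1|0$ quotient $T\bP^{1|1}_k/\mathcal{D}$ precisely because $2$ is invertible; as $D\otimes D$ generates the rank $1|0$ bundle $\mathcal{D}\otimes\mathcal{D}$, the Frobenius map is an isomorphism on $U_0$, and likewise on $U_1$.

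The main obstacle is bookkeeping rather than conceptual: pinning down the signs in the odd contraction $s(z_0\partial_\zeta)$ and in the super-commutator $D^2$, and checking that the local generator $D$ on $U_0$ and its counterpart on $U_1$ glue to a single subbundle. The essential content is the vanishing $s(E) = 0$, so that $s$ descends through $j$, together with the fact that $D^2 = \pm\partial_u$ is nonzero modulo $\mathcal{D}$, which is exactly the $\mathrm{char}\,k\neq 2$ hypothesis.
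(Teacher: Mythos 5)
Your proposal is correct and follows essentially the same route as the paper: a chart-by-chart computation that identifies $\Ker(s)$ as a free rank $1|1$ module containing $E$, pushes it through $j$ to get an odd local generator (your $D$ is the paper's $Z_0=j(\widehat{Z}_0)$ with $\widehat{Z}_0=-\zeta\partial_{z_1}+z_0\partial_\zeta$, i.e.\ your $W_2$), and checks $D^2=\pm\partial_u$. The only cosmetic difference is that you find the kernel by solving the defining linear equation directly after trivializing $\cO(1)$, whereas the paper exhibits the two sections $E,\widehat{Z}_i$ and uses basepoint-freeness of $s$ plus the super Nakayama lemma to see that they span.
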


\begin{proof}
In the affine open subsupervariety 
$U_1 := \{z_1 \neq 0\} 
\subset \bP^{1|1}_k$, one calculates that the Euler vector field $E$ and 
the linear vector field $\widehat{Z}_1 = \zeta \partial_{z_0} + z_1 
\partial_\zeta$ lie in $\Ker(s)$ and are linearly independent. 
At any point $p \in \bP^{1|1}_k$, $s$ induces a linear map of 
super vector spaces $s_p: [\mathcal{O}(1) \otimes \Der(S)]_p \to 
[\mathcal{O}(2)]_p$ on the fibers. It is clear that $s$ is a basepoint-free 
section, hence $s_p$ is always surjective. By linear algebra, 
$\Ker(s_p)$ is $1|1$ dimensional and hence $E_p$ and $\widehat{Z_1,}_p$ 
span $\Ker(s_p)$. By the super Nakayama's lemma, $E$ and 
$\widehat{Z_1}$ span $\Ker(s)$ near $p$. Since $p$ was arbitrary, $E$ and 
$\widehat{Z_{1}}$ form a basis for $\Ker(s)$ in $U_1$.

One sees
that $Z_{1} := j(\widehat{Z_{1}}) = 
\partial_\eta + \eta \partial_w$, where $w = z_0/z_1, \eta = \zeta/z_1$ are the usual affine coordinates in $U_{1}$. $Z_{1}^2 = \partial_w$ and so 
$Z_{1}$ defines a SUSY structure in $U_1$. A similar calculation with the linear vector field $\widehat{Z}_0 := -\zeta \partial_{z_1} + z_0 \partial_\zeta$ shows that $j(\Ker(s))$ defines a SUSY structure on $U_0 = \{z_0 \neq 0\}$, hence the image of $\Ker(s)$ under $j$ defines a SUSY structure on $\bP^{1|1}_k$.
\end{proof}

We note that by the considerations of \cite{fk}, this is the unique SUSY structure on $\bP^{1|1}_k$, up to SUSY-isomorphism.

We now need the following proposition. The
proof is completely similar to the one in \cite{fk}  Prop. 5.2, however
since the context here is more general, we include it for completeness.

\begin{lemma}\label{SUSYform}
Let $A$ be an affine $k$-superalgebra. Let $\omega, \omega'$ be two global sections of $\cO_A(1) \otimes \Omega_{S/A}$ such that $\mathcal{D} := j(\Ker(\omega)), \mathcal{D}' := j(\Ker(\omega'))$ are $0|1$ distributions on $\bP^{1|1}_A$. Suppose $\mathcal{D} = \mathcal{D}'$. Then $\omega' = h \omega$ for some even invertible function $h$ on $\bP^{1|1}_A$.
\end{lemma}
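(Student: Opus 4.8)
The plan is to show that the hypothesis $\mathcal{D} = \mathcal{D}'$ forces the two kernel subsheaves $\Ker(\omega), \Ker(\omega') \subseteq \cO_A(1) \otimes \Der(S_A)$ to coincide, and then to recognize two surjections with a common kernel onto the line bundle $\cO_A(2)$ as differing by an automorphism of $\cO_A(2)$, i.e. by multiplication by an even invertible function. Writing $V := \cO_A(1) \otimes \Der(S_A)$, the first thing I would record is that, because $\mathcal{D}$ is a genuine distribution (a subbundle of rank $0|1$), the section $\omega$ is necessarily basepoint-free: if $\omega$ vanished on an entire fiber $V_p$, then, since $j$ kills only the line $\langle E \rangle$, the fiber of $j(\Ker(\omega))$ at $p$ would have dimension $1|1$ rather than $0|1$, contradicting the hypothesis. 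Hence $\omega \colon V \to \cO_A(2)$ is a surjection of bundles and $\Ker(\omega)$ is a subbundle of rank $(2|1)-(1|0) = 1|1$; the same applies to $\omega'$.

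The core step is to prove $\Ker(\omega) = j^{-1}(\mathcal{D})$. From $j(\Ker(\omega)) = \mathcal{D}$ one gets the inclusion $\Ker(\omega) \subseteq j^{-1}(\mathcal{D})$ immediately. For the reverse, I would use the Euler sequence \eqref{eulerseq}: pulling back the subbundle $\mathcal{D}$ along the surjection $j$ yields a subbundle $j^{-1}(\mathcal{D})$ sitting in a short exact sequence $0 \to \langle E\rangle \to j^{-1}(\mathcal{D}) \to \mathcal{D} \to 0$, so that $j^{-1}(\mathcal{D})$ has rank $(1|0)+(0|1)=1|1$, exactly the rank of $\Ker(\omega)$. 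Since both are subbundles of $V$ with $\Ker(\omega) \subseteq j^{-1}(\mathcal{D})$ of equal rank, on every fiber the inclusion $\Ker(\omega)\otimes k(p) \subseteq j^{-1}(\mathcal{D})\otimes k(p)$ is an equality of $1|1$-dimensional subspaces of $V \otimes k(p)$; thus $j^{-1}(\mathcal{D})/\Ker(\omega)$ is a finitely generated sheaf with vanishing fibers, and the super Nakayama lemma forces it to be zero. Running the identical argument for $\omega'$ and using $\mathcal{D} = \mathcal{D}'$ gives $\Ker(\omega) = j^{-1}(\mathcal{D}) = j^{-1}(\mathcal{D}') = \Ker(\omega')$.

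Finally, $\omega$ and $\omega'$ are two surjections $V \to \cO_A(2)$ onto the same line bundle with a common kernel $\mathcal{K} := \Ker(\omega) = \Ker(\omega')$; each therefore factors through an isomorphism $\overline{\omega}, \overline{\omega'} \colon V/\mathcal{K} \xrightarrow{\sim} \cO_A(2)$, and $\overline{\omega'} \circ \overline{\omega}^{-1}$ is an automorphism of the line bundle $\cO_A(2)$. As recalled in the proof of Proposition \ref{pgl-iso} (using $\underline{\Hom}(L,L) = L^* \otimes L = \cO$), every automorphism of a rank $1|0$ line bundle is multiplication by an even invertible function $h$ on $\bP^{1|1}_A$. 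Hence $\omega' = h\,\omega$, which is the desired conclusion.

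The step I expect to be the main obstacle is the middle one, together with the care it demands in the super setting. I must make sure that both $j^{-1}(\mathcal{D})$ and $\Ker(\omega)$ genuinely are subbundles (locally direct summands), so that the fiberwise comparison is legitimate, and I must argue via super Nakayama rather than a naive ``torsion-free of rank zero is zero'' principle, since $\bP^{1|1}_A$ is non-reduced whenever $A$ carries odd nilpotents. Extracting basepoint-freeness of $\omega$ from the bare assumption that $\mathcal{D}$ is a distribution also warrants a careful fiberwise check.
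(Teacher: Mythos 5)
Your argument is correct, and its first half runs parallel to the paper's own proof: both use the Euler sequence to identify $\Ker(\omega)$ with the rank $1|1$ lift $j^{-1}(\mathcal{D})$ of $\mathcal{D}$ containing $\langle E\rangle$, with the super Nakayama lemma converting a fiberwise statement into an equality of subsheaves. Where you genuinely diverge is in how $h$ is produced. The paper chooses a local complement $\mathcal{E}$ of $\mathcal{D}$, lifts it, picks a local basis $\widehat{X}, E, \widehat{Z}$ adapted to the splitting, defines $h:=\omega'(\widehat{X})/\omega(\widehat{X})$ chart by chart, and then must verify that this ratio is independent of the choice of $\mathcal{E}$ and of $\widehat{X}$ before it can glue $h$ into a global function. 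You instead observe that once $\Ker(\omega)=\Ker(\omega')=j^{-1}(\mathcal{D})$ is established, $\omega$ and $\omega'$ factor through isomorphisms $V/\mathcal{K}\to\cO_A(2)$, so they differ by a global automorphism of the line bundle $\cO_A(2)$, i.e.\ by a global even invertible function; this buys you a coordinate-free conclusion and eliminates the well-definedness check and the gluing entirely. The one place where you are no more rigorous than the paper --- and you rightly flag it --- is the opening fiberwise claim that the distribution hypothesis forces $\omega_p$ to be surjective with $E_p\in\Ker(\omega_p)$: the fiber of an image or kernel sheaf is not naively the image or kernel of the induced map on fibers, so this step does require the ``careful fiberwise check'' you promise; the paper simply asserts the corresponding statement $\Ker(\omega_p)=\mathrm{span}\{\widehat{Z}_p,E_p\}$ without proof, and in the only application of the lemma both sections are visibly basepoint-free, so nothing is lost.
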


\begin{proof}
Let $p \in \bP^{1|1}_A$ be a point. 
Since $\mathcal{D}$ is \textit{locally a direct summand of  $T \bP^{1|1}_A/\underline{\spec}(A)$ }, 
we have a local splitting 
$\mathcal{D}|_U \oplus \mathcal{E} = (T\bP^{1|1}_A/\underline{\spec}(A))|_U$ 
in some neighborhood $U \ni p$.
Via the Euler exact sequence (base changed to $\spec(A)$), we may lift $\mathcal{D}|_U$ (resp. $\mathcal{E}$) uniquely to a rank $1|1$ (resp. $2|0$) submodule $\widehat{\mathcal{D}}$ (resp. $\widehat{\mathcal{E}}$) of $[\cO_A(1) \otimes \Der(S_A)]|_U$ containing the $1|0$ line bundle $\langle E \rangle$ spanned by the Euler vector field, such that $\widehat{\mathcal{D}} \cap \widehat{\mathcal{E}} = \langle E \rangle$. We may therefore find local sections $\widehat{Z}$ (resp. $\widehat{X}$) of $\widehat{\mathcal{D}}$ (resp $\widehat{\mathcal{E}}$) such that $\widehat{Z}, E$ (resp. $\widehat{X}, E$) form a basis for $\widehat{\mathcal{D}}$ (resp. $\widehat{\mathcal{E}}$). Note that the condition $\widehat{\mathcal{D}} \cap \widehat{\mathcal{E}} = \langle E \rangle$ implies $\widehat{X}, \widehat{Z}, E$ form a basis of $[\cO_A(1) \otimes \Der(S_A)]|_U$.

Viewing $\omega|_U$ as an $\cO_{\bP^{1|1}_A}$-linear map from $[\cO_A(1) \otimes \Der(S_A)]|_U$ to $\cO_A(2)|_U$, we have an induced linear map of super vector spaces 

\begin{equation*}
\omega_p: (\cO_A(1) \otimes \Der(S_A))_p \to  (\cO_A(2))_p.  
\end{equation*}

\smallskip

As $\Ker(\omega_p) =  span \{\widehat{Z}_p, E_p\}$, we see by linear algebra that $\omega_p$ is a surjection, and that $\omega_p(\widehat{X}_p)$ is a basis for $(\cO_A(2))_p$; the analogous conclusion holds for $\omega'_p$ and $\omega'_p(\widehat{X}_p)$. Hence by the super Nakayama's lemma, $\omega(\widehat{X})$ is a basis for $\cO_A(2)|_U$, and the same is true of $\omega'(\widehat{X})$ (shrinking $U$ if necessary). Hence $\omega'(\widehat{X})/\omega(\widehat{X})$ is an invertible even function on $U$; let us denote it by $h$.

To show that $h$ is independent of the local complement $\mathcal{E}$ and the choice of basis element $\widehat{X}$, suppose $\mathcal{E}'$ is another local complement to $\mathcal{D}$ on $U$, and let $\widehat{X}', E$ be a basis of the lift $\widehat{\mathcal{E}}'$ of $\mathcal{E}'$. Then we have $\widehat{X}' = a\widehat{X} + bE + \alpha \widehat{Z}$ for some $a, b, \alpha \in \cO_{\bP^{1|1}_A}(U)$, $a, b$ even and $\alpha$ odd. As $\widehat{X}, E, \widehat{Z}$ and $\widehat{X}', E, \widehat{Z}'$ are both local bases for $\cO_A(1) \otimes \Der(S_A)$, $a$ must be a unit.

Then we have 
\begin{align*}
\omega'(\widehat{X}')/\omega(\widehat{X}') &= \omega'(a\widehat{X} + bE + \alpha \widehat{Z})/\omega(a\widehat{X} + bE + \alpha \widehat{Z})\\
&= \omega'(\widehat{X})/\omega(\widehat{X})
\end{align*}

\noindent since $\omega, \omega'$ both annihilate $E$ and $\widehat{Z}$. This proves that the expression \break $\omega'(\widehat{X})/\omega(\widehat{X})$ is independent of all choices and hence $h$ is a well-defined function on all of $\bP^{1|1}_A$. The equality $\omega' = h \omega$ clearly holds locally, and since $h$ is now known to be globally defined, it holds globally.

\end{proof}

\begin{proposition}
Let $f$ be an automorphism of $\bP^{1|1}_A$. Then $f$ preserves the SUSY structure defined by $s$ if and only if for some (hence every) lift $\widetilde{f}$ of $f$ to $\rGL_{2|1}(A)$, 
$\widetilde{f}^*(s) = t s$ for some invertible function $t$. 
\end{proposition}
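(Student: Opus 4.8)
The plan is to pull the geometric condition $df(\mathcal{D})=\mathcal{D}$, which lives on the relative tangent sheaf $T\bP^{1|1}_A$, back up through the Euler sequence \eqref{eulerseq} (base changed to $\uspec A$) to the bundle $\cO_A(1)\otimes\Der(S_A)$ of linear vector fields, where it becomes directly comparable with the kernels $\Ker(s)$ and $\Ker(\widetilde{f}^{\,*}s)$. By Proposition \ref{pgl-iso} every $f\in\Aut(\bP^{1|1}_A)$ has a lift $\widetilde{f}\in\rGL_{2|1}(A)$, unique up to the central scalars $\bG^{1|0}_m(A)=\{aI\}$. Being linear, $\widetilde{f}$ acts compatibly on $S_A$, on $\Der(S_A)$ and $\Omega_{S/A}$, and hence $\rGL_{2|1}$-equivariantly on the entire sequence \eqref{eulerseq}. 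I write $\widetilde{f}_{\#}$ for its action on the middle term $\cO_A(1)\otimes\Der(S_A)$; a linear map commutes with the scaling field it generates, so $\widetilde{f}_{\#}$ fixes the Euler field $E$, preserves the subsheaf $\langle E\rangle=\Ker(j)$, and descends through $j$ to the tangent action, i.e. $j\circ\widetilde{f}_{\#}=df\circ j$.

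Granting this, the proof proceeds by two reductions. First, since $E\in\Ker(s)$ and $\widetilde{f}_{\#}E=E$, both $\Ker(s)$ and $\widetilde{f}_{\#}(\Ker s)$ contain $\Ker(j)=\langle E\rangle$; as $j$ matches subsheaves containing $\langle E\rangle$ bijectively with subsheaves of $T\bP^{1|1}_A$, the SUSY condition $df(\mathcal{D})=\mathcal{D}$, namely $j(\widetilde{f}_{\#}\Ker s)=j(\Ker s)$, is equivalent to $\widetilde{f}_{\#}(\Ker s)=\Ker(s)$. Second, because $\widetilde{f}$ is linear it preserves the bidegree of $s$, so $\widetilde{f}^{\,*}s$ is again a global section of $\cO_A(1)\otimes\Omega_{S/A}$; and since contraction of forms against vector fields is $\widetilde{f}$-equivariant, one has $(\widetilde{f}^{\,*}s)(W)=\widetilde{f}^{\,*}\bigl(s(\widetilde{f}_{\#}W)\bigr)$, whence $\Ker(\widetilde{f}^{\,*}s)=\widetilde{f}_{\#}^{-1}(\Ker s)$. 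Combining the two reductions, $f$ is SUSY-preserving if and only if $\Ker(\widetilde{f}^{\,*}s)=\Ker(s)$, equivalently $\mathcal{D}'=\mathcal{D}$, where $\mathcal{D}':=j(\Ker\widetilde{f}^{\,*}s)=(df)^{-1}(\mathcal{D})$ is again a $0|1$ distribution.

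It then remains to feed this into Lemma \ref{SUSYform}. If $f$ is SUSY-preserving, then $\mathcal{D}$ and $\mathcal{D}'$ are equal $0|1$ distributions of the form $j(\Ker s)$ and $j(\Ker\widetilde{f}^{\,*}s)$, so the lemma applied to $\omega=s$, $\omega'=\widetilde{f}^{\,*}s$ gives $\widetilde{f}^{\,*}s=ts$ for an even invertible function $t$. Conversely, if $\widetilde{f}^{\,*}s=ts$ with $t$ invertible then $\Ker(\widetilde{f}^{\,*}s)=\Ker(s)$, so $\mathcal{D}'=\mathcal{D}$ and $f$ preserves the SUSY structure. For the ``some/every'' clause, a second lift is $\widetilde{f}\,(aI)$ with $a\in A_0^{\times}$, and since $s$ has bidegree $(1,1)$ in the linear coordinates and their differentials we have $(aI)^{*}s=a^{2}s$; hence $(\widetilde{f}\,(aI))^{*}s=a^{2}\,\widetilde{f}^{\,*}s$, and the property of being an invertible multiple of $s$ does not depend on the chosen lift.

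The main obstacle is the bookkeeping of the first paragraph: checking that $\widetilde{f}$ acts $\rGL_{2|1}$-equivariantly on the Euler sequence with $\widetilde{f}_{\#}E=E$, so that the condition on $T\bP^{1|1}_A$ faithfully reflects the condition on $\Ker(s)\subseteq\cO_A(1)\otimes\Der(S_A)$, together with pinning down the variance in the contraction pairing---so that one obtains $\Ker(\widetilde{f}^{\,*}s)=\widetilde{f}_{\#}^{-1}(\Ker s)$ and not its inverse---while keeping the $\cO_A(1)$-twists consistent throughout. Once these identifications are set up correctly, Lemma \ref{SUSYform} delivers the result.
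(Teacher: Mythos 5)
Your proof is correct and follows essentially the same route as the paper: both use the $\rGL_{2|1}(A)$-equivariance of the Euler sequence and of the contraction pairing to translate the condition $df(\mathcal{D})=\mathcal{D}$ into the equality $\Ker(\widetilde{f}^{\,*}s)=\Ker(s)$ of kernels in $\cO_A(1)\otimes\Der(S_A)$, and then invoke Lemma \ref{SUSYform} to convert that into $\widetilde{f}^{\,*}s=ts$. The only presentational difference is that the paper states the SUSY condition pointwise and passes to sheaf-level equality via super Nakayama, a step you implicitly absorb into your identification of subsheaves through $j$.
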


\begin{proof}
We begin by noting that $\rGL_{2|1}(A)$ preserves $A^*_0$-invariant open subsets of $\mathbb{A}^{2|1}_A \backslash \{0\}$, hence it acts naturally by pullback of functions on $\cO_A(1) \otimes \Der(S_A)$, where we interpret the latter as the sheaf assigning to any open subset $U \subseteq \bP^{1|1}_A$ the linear vector fields on $\pi^{-1}(U) \subseteq \mathbb{A}^{2|1}_A \backslash \{0\}$. 

The subsupergroup of invertible scalar matrices $\{cI : c \in A^*_0\}$ is central in $\rGL_{2|1}(A)$, hence this $\rGL_{2|1}(A)$-action preserves the subalgebra of $A^*_0$-invariant functions on any $A^*_0$-invariant open subset of $\mathbb{A}^{2|1}_A \backslash \{0\}$. Hence we have an induced $\rGL_{2|1}(A)$-action on the sheaf $\cO_{\bP^{1|1}_A}$. Clearly, invertible scalar matrices act trivially on $\cO_{\bP^{1|1}_A}$, hence the $\rGL_{2|1}(A)$-action on $\cO_{\bP^{1|1}_A}$ factors through $\rPGL_{2|1}(A)$.

We see from the above that the action of $\rGL_{2|1}(A)$ on $\cO_A(1) \otimes \Der(S_A)$ by pullback of functions induces naturally a $\rPGL_{2|1}(A)$-action on $\mathcal{O}_{\bP^{1|1}_A}$, hence on $T\bP^{1|1}_A/\underline{\spec}(A)$, also given by pullback of functions. But this is precisely the $\rPGL_{2|1}(A)$-action on $T \bP^{1|1}_A/\underline{\spec}(A)$  induced by the action of $\rPGL_{2|1}(A)$ on $\bP^{1|1}_A$ by automorphisms. 

Recalling that the sheaf morphism $j: \cO_A(1) \otimes \Der(S_A) \to T\bP^{1|1}_A/\underline{\spec}(A)$ is just given by restricting a linear vector field to act on $A^*_0$-invariant functions, we see $j$ is equivariant with respect to the $\rGL_{2|1}(A)$- and $\rPGL_{2|1}(A)$-actions previously defined.

 We also have a $\rGL_{2|1}(A)$-action on $\cO_A(1) \otimes \Omega_{S/A}$ by the natural action on both factors, and for $\omega \in \Gamma(\cO_A(1) \otimes \Omega_{S/A}) = \Gamma(\cO_A(1)) \otimes \Omega_{S/A}$, we write $g^*(\omega)$ for $g \cdot \omega$.


Since the action of $\rGL_{2|1}(A)$ on $\cO_A(1) \otimes \Der(S_A)$ is the same as the natural action on the individual factors, and the $\rGL_{2|1}(A)$-action on $\Omega_{S/A}$ is dual to that on $\Der(S_A)$, it follows that the evaluation pairing $[\cO_A(1) \otimes \Der(S_A)] \otimes [\cO_A(1) \otimes \Omega_{S/A}] \to \cO_A(2)$ is $\rGL_{2|1}(A)$-equivariant, where $\cO_A(2)$ is endowed with the natural $\rGL_{2|1}(A)$-action.

From the preceding discussion, we see that $f$ is SUSY-preserving if and only if $j[\Ker(\omega)]_p = j[\Ker(\widetilde{f}^*(\omega)]_p$ for any point $p$. 

We claim this is true if and only if $j[\Ker(\omega)] = j[\Ker(\widetilde{f}^*(\omega))]$.  One direction is clear. For the other, suppose $j[\Ker(\omega)]_p = j[\Ker(\widetilde{f}^*(\omega))]_p$ for any point $p$. Then by the super Nakayama's lemma $j[\Ker(\omega)] = j[\Ker(\widetilde{f}^*(\omega))]$ in a neighborhood of $p$, hence globally. The claim then follows from Lemma \ref{SUSYform}.
\end{proof}

In order to determine the supergroup of SUSY-preserving automorphisms of $\mathbb{P}^{1|1}_k$ we must discuss various other supergroups. We follow closely the discussion in \cite{ma1}. 

\begin{definition}
The $2|1$-dimensional {\it conformal symplecticorthogonal supergroup} 
$\rC_{2|1}$ is the subfunctor of $\rGL_{2|1}$ that preserves, up to multiplication by an even invertible constant, the split nondegenerate supersymplectic form on $k^{2|1}$ given by $(v, w) = v^tHw$, where

\begin{equation}
H := \begin{pmatrix} 0 & 1 & 0 \\ -1 & 0 & 0 \\ 0 & 0 & -1 \end{pmatrix},
\end{equation}\ 

\noindent and ${}^t$ denotes the super transpose of a matrix. 
More precisely, for every 
$k$-superalgebra $A$, $\rC_{2|1}$ 
is the functor 
$\salgk \to \grps$ given by

\begin{equation}\label{defnC21}
\rC_{2|1}(A) := \{B \in \rGL_{2|1}(A) \, : \, B^tHB = Z(B)H\},
\end{equation}

\noindent where $Z: \rGL_{2|1} \to \mathbb{G}^{1|0}_m$ is a fixed homomorphism.

The $2|1$-dimensional 
{\it projective conformal symplecticorthogonal supergroup}
$\rPC_{2|1}$ is the image of $\rC_{2|1}$ in $\rPGL_{2|1}$, i.e, it is the sheafification of the group-valued functor $A \to \rC_{2|1}(A)/\{aI : a \in A^*_0\}$.
\end{definition}

\begin{proposition}
$\rC_{2|1}$ and $\rPC_{2|1}$ are representable.
\end{proposition}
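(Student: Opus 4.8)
The plan is to treat the two groups in turn: $\rC_{2|1}$ is realized directly as a closed subsuperscheme of the affine supergroup $\rGL_{2|1}$, and $\rPC_{2|1}$ is then obtained by descent as a closed subsuperscheme of $\rPGL_{2|1}$, which is already known to be representable by the Proposition above. The case of $\rC_{2|1}$ is routine; the genuine work lies in matching the sheafified image $\rPC_{2|1}$ with an honest closed subfunctor of $\rPGL_{2|1}$, and for that the stalky-sheaf machinery of Lemmas \ref{stalky} and \ref{localiso} is the right tool.

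For $\rC_{2|1}$, I first observe that whenever $B^t H B = \lam H$ with $H$ nondegenerate, the even invertible multiplier $\lam$ is uniquely determined by $B$; this is precisely the value $Z(B)$, so $Z$ is a regular function on $\rGL_{2|1}$. (Equivalently, one may introduce $\lam$ as an auxiliary coordinate on $\rGL_{2|1} \times \bG^{1|0}_m$ and note that the projection forgetting $\lam$ is an isomorphism onto $\rC_{2|1}$.) Reading off the entries of the matrix identity $B^t H B - Z(B) H = 0$ then produces finitely many polynomial equations in the coordinates of $\rGL_{2|1} = \uspec k[x_{ij}][d_1^{-1}, d_2^{-1}]$, so $\rC_{2|1}$ is a closed subsuperscheme of an affine supergroup and hence representable. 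I record for later use that the central scalars $\bG^{1|0}_m$ lie in $\rC_{2|1}$, with $Z(cI) = c^2$.

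For $\rPC_{2|1}$ I would realize it inside $\rPGL_{2|1}$. The key point is that the condition ``$B$ preserves $H$ up to an even invertible scalar'' can be phrased without naming the scalar, as the proportionality of $B^t H B$ and $H$: all $2 \times 2$ minors of the pair $(B^t H B \mid H)$ vanish. Since $B^t H B$ is homogeneous of degree $2$ in $B$, each such minor scales by $c^2$ under $B \mapsto cB$, so its vanishing is invariant under the central $\bG^{1|0}_m$-action and descends to a closed subfunctor $P \subseteq \rPGL_{2|1}$. I would then show the natural morphism $\rPC_{2|1} \to P$ is an isomorphism of sheaves by testing it on local superalgebras $R$ and applying Lemma \ref{localiso}: for $R$ local, any class in $P(R)$ is represented by some $B \in \rGL_{2|1}(R)$ with $B^t H B$ proportional to $H$; as $H$ has a unit entry and $B$ is invertible, $B^t H B$ is nondegenerate, forcing the proportionality constant to be a unit $\lam \in R_0^{\times}$, so $B \in \rC_{2|1}(R)$ and the class lies in the image. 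This uses that $\rC_{2|1}$ and $\rPC_{2|1}$ are stalky Zariski sheaves, which follows verbatim as in Lemma \ref{stalky}, since localization commutes both with imposing the defining equations and with passing to the central quotient.

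The main obstacle is exactly this last identification $\rPC_{2|1} \cong P$: one must verify that over a local ring the purely closed-condition description (proportionality of $B^t H B$ to $H$) really recovers the set-theoretic image of $\rC_{2|1}$, i.e.\ that the multiplier produced is a unit rather than a scalar in the maximal ideal, and that sheafification does not enlarge the functor beyond $P$. Both points rest on nondegeneracy of $H$ together with invertibility of $B$ (which forces $\lam I = (B^t H B)H^{-1}$ to be invertible, hence $\lam \in R_0^{\times}$), and on the stalky property that reduces the check to local rings. Granting these, representability of $\rPC_{2|1}$ follows, since a closed subfunctor of the representable $\rPGL_{2|1}$ is representable.
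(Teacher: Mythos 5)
Your treatment of $\rC_{2|1}$ matches the paper's: both eliminate the multiplier $Z(B)$ (the paper by taking Berezinians of $B^tHB=Z(B)H$ to get $Z(B)=\ber(B)^2$, you by solving for it from a unit entry of $H$) and then read off polynomial equations cutting out a closed subsupergroup of $\rGL_{2|1}$. For $\rPC_{2|1}$ you diverge genuinely. The paper uses Manin's trick: the Berezinian gives a split short exact sequence $0\to\rSC_{2|1}\to\rC_{2|1}\to\bG^{1|0}_m\to 0$ whose splitting $a\mapsto aI$ lands exactly on the central scalars, so $\rC_{2|1}$ is the internal direct product of $\rSC_{2|1}$ with the scalars and the quotient $\rPC_{2|1}$ is \emph{literally} the closed affine subsupergroup $\rSC_{2|1}$ --- no sheafification, no descent, and the resulting identification $\rPC_{2|1}\cong\rSC_{2|1}$ is exactly what is reused in Proposition \ref{irredcomp}. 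You instead cut a subfunctor $P$ out of $\rPGL_{2|1}$ by the $\bG^{1|0}_m$-invariant proportionality equations and identify $\rPC_{2|1}$ with $P$ on local superalgebras via the stalky-sheaf Lemmas \ref{stalky} and \ref{localiso}. This route is workable and in a sense more robust, since it does not rely on the fortunate fact that $\ber$ splits the scalars off as a direct factor; but it is also where your argument has its one real soft spot. You assert, rather than prove, that the invariant closed condition on $\rGL_{2|1}$ ``descends to a closed subfunctor $P\subseteq\rPGL_{2|1}$'': since a general element of $\rPGL_{2|1}(R)$ need not lift to $\rGL_{2|1}(R)$, making $P$ into an honest closed subsuperscheme requires either descent of closed subschemes along the $\bG^{1|0}_m$-quotient $\rGL_{2|1}\to\rPGL_{2|1}$ or rewriting the equations in the coordinates of the model $\rAut(\uM_{2|1})\subset\rGL_{M|N}$; neither step is hard, but neither is free. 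The local-ring verification itself is sound: part (2) of Lemma \ref{matlemma} guarantees lifts over local $R$, and your observation that invertibility of $B$ together with nondegeneracy of $H$ forces the multiplier $\lambda$ (determined by $\lambda I=(B^tHB)H^{-1}$) to be a unit is correct. Net assessment: your argument does establish representability, but at the cost of extra foundational work that the paper's direct-product decomposition avoids, and it does not hand you the explicit isomorphism $\rPC_{2|1}\cong\rSC_{2|1}$ that the paper exploits afterwards.
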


\begin{proof}
Taking the Berezinian of both sides of (\ref{defnC21}), one sees that $Z(B) = \ber(B)^2$. Thus, given 

\begin{equation*}
B = \begin{pmatrix} a & b & \alpha  \\ c & d & \beta \\ \gamma& \delta & e \end{pmatrix} \in \rGL_{2|1}(A),
\end{equation*}\

\noindent a direct calculation shows that $B$ satisfies (\ref{defnC21}) 
if and only if the following equations hold: 

\begin{align*}
&e^2 + 2 \alpha \beta = \ber(B)^2\\
&ad - bc - \gamma \delta = \ber(B)^2\\
&a \beta -c \alpha - e \gamma = 0\\
&b \beta - d \alpha - e \delta = 0.
\end{align*}

Thus these equations define $\rC_{2|1}$ as a closed affine algebraic 
subsupergroup of $\rGL_{2|1}$. 

To prove that $\rPC_{2|1}$ is representable, we use the trick of \cite{ma1}. Let $\rSC_{2|1}$ denote the functor $\salgk \to \grps$ given by

\begin{equation*}
\rSC_{2|1}(A) := \{B \in \rC_{2|1}(A) :  \ber(B) = 1\}.
\end{equation*}\

Since its defining equations are those of $\rC_{2|1}$ together with the equation $\ber(B) = 1$, $\rSC_{2|1}$ is a closed affine algebraic subsupergroup 
of $\rGL_{2|1}$. There is a short exact sequence of supergroups

\begin{align}\label{splittingconformal}
0 \to \rSC_{2|1} \to \rC_{2|1} \xrightarrow{\ber} \mathbb{G}^{1|0}_m \to 0.
\end{align}\

\noindent There is a splitting of this sequence, given on $A$-points by sending $a \in A^*_0$ to $aI$, and the image of $\bG^{1|0}_m$ under the splitting 
is clearly normal in $\rC_{2|1}$, hence $\rC_{2|1}$ is the internal direct 
product of $\rSC_{2|1}$ and the subsupergroup $\{aI : a \in A^*_0\}$. This direct product decomposition allows us to naturally identify the functor $\rPC_{2|1}$ with the functor of points of $\rSC_{2|1}$; in particular, we see $\rPC_{2|1}$ is an affine algebraic supergroup, isomorphic to $\rSC_{2|1}$.
\end{proof}

\begin{definition}
The $2|1$-dimensional {\it symplecticorthogonal supergroup} $\rSpO_{2|1}$  
is the functor $\salgk \to \grps$ 

\begin{equation}\label{defnOSp}
\rSpO_{2|1}(A) := \{B \in \rGL_{2|1}(A) \, : \, B^tHB = H\}.
\end{equation}\
\end{definition}

\begin{remark}
$\rSpO_{2|1}$  is well-known to be representable; the reader may readily write down defining equations for $\rSpO_{2|1}$, completely analogous to those for $\rC_{2|1}$, which show that $\rSpO_{2|1}$ is a closed affine algebraic subsupergroup of $\rGL_{2|1}$.
\end{remark}

\begin{proposition}\label{irredcomp}
$\rPC_{2|1}$ is isomorphic to the irreducible component $(\rSpO_{2|1})^0$ of $\rSpO_{2|1}$ containing the identity. 
\end{proposition}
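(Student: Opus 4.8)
The plan is to leverage the isomorphism $\rPC_{2|1} \cong \rSC_{2|1}$ established in the previous proposition, so that it suffices to identify $\rSC_{2|1}$ with $(\rSpO_{2|1})^0$. First I would observe that $\rSC_{2|1}$ is exactly the kernel of the Berezinian on $\rSpO_{2|1}$: if $B \in \rSpO_{2|1}(A)$ satisfies $\ber(B) = 1$, then $Z(B) = \ber(B)^2 = 1$ and the defining relation $B^tHB = Z(B)H$ collapses to $B^tHB = H$, while conversely any $B \in \rSpO_{2|1}(A)$ with $\ber(B) = 1$ manifestly lies in $\rSC_{2|1}(A)$. Thus $\rSC_{2|1} = \{B \in \rSpO_{2|1} : \ber(B) = 1\}$ as closed subsupergroups of $\rGL_{2|1}$.

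Next I would pin down the image of the Berezinian on $\rSpO_{2|1}$. Taking the Berezinian of both sides of $B^tHB = H$ exactly as in the previous proof (using $\ber(B^t) = \ber(B)$ and $\ber(H) = -1 \neq 0$) yields $\ber(B)^2 = 1$. Hence $\ber$ restricts to a homomorphism $\rSpO_{2|1} \to \mu_2$. Since $\mathrm{char}(k) \neq 2$, the group scheme $\mu_2$ is \'etale, i.e. it is the disjoint union of the two reduced points $\{1\}$ and $\{-1\}$; consequently $\rSC_{2|1} = \Ker(\ber)$ is an open and closed subsupergroup of $\rSpO_{2|1}$.

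It then remains to show that this open-closed piece is precisely the identity component. I would use the fact that the topological space of a superscheme coincides with that of its reduction, and compute the latter: setting the four odd entries $\alpha, \beta, \gamma, \delta$ to zero in the defining equations of $\rSpO_{2|1}$ leaves $ad - bc = 1$ and $e^2 = 1$, so $(\rSpO_{2|1})_{\red} \cong \rSL_2 \times \mu_2$, whose two irreducible components are distinguished by the value $e = \pm 1$. On these reduced points one computes $\ber(B) = (ad-bc)/e = e$, so the component $\{e = 1\}$, which contains the identity, is exactly $\{\ber = 1\}$. Therefore the identity component $(\rSpO_{2|1})^0$, being connected and containing the identity, maps to the identity point of the \'etale group $\mu_2$ and so is contained in $\Ker(\ber) = \rSC_{2|1}$. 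Conversely $|\rSC_{2|1}|$ is the single irreducible component $\cong \rSL_2$, hence connected and containing the identity, so $\rSC_{2|1} \subseteq (\rSpO_{2|1})^0$. Combining the two inclusions gives $\rSC_{2|1} = (\rSpO_{2|1})^0$, and the isomorphism $\rPC_{2|1} \cong (\rSpO_{2|1})^0$ follows.

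I expect the main obstacle to be the last step: making rigorous the passage from the irreducible components of the superscheme $\rSpO_{2|1}$ to those of its classical reduction, and confirming that $\rSC_{2|1}$ really is connected rather than a union of components. This hinges on the identification $|\rSpO_{2|1}| = |(\rSpO_{2|1})_{\red}|$ together with the irreducibility of $\rSL_2$, and on the fact that in characteristic $\neq 2$ the Berezinian already separates the two components, so that no nilpotent subtleties can enlarge $\rSC_{2|1}$ beyond the identity component.
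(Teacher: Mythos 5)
Your proposal is correct and follows essentially the same route as the paper: identify $\rSC_{2|1}$ with the locus $\{\ber = 1\}$ in $\rSpO_{2|1}$, observe that $\rSC_{2|1}$ is irreducible because its reduction is $\rSL_2$, and use the (local) constancy of $\ber$ to obtain both inclusions with $(\rSpO_{2|1})^0$. The paper packages the second step as a split exact sequence exhibiting $\rSpO_{2|1}$ as the internal direct product of $\{\pm I\}$ and $\rSC_{2|1}$, whereas you argue directly on the underlying topological space via the reduction and the \'etaleness of $\mu_2$; the two are interchangeable here.
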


\begin{proof}
Taking the Berezinian of both sides of (\ref{defnOSp}) shows that $\ber(B) = \pm 1$ for any $B \in \rSpO_{2|1}(A)$. This yields a short exact sequence of supergroups

\begin{align}
0 \to \rSC_{2|1} \to \rSpO_{2|1} \xrightarrow{\ber} \{\pm 1\} \to 0.
\end{align}\

\noindent which is split by the morphism $\pm 1 \mapsto \pm I$ and $\{\pm I\}$ is obviously normal in $\rSpO_{2|1}$. Thus $\rSpO_{2|1}$ is the internal direct product of $\{\pm I\}$ and $\rSC_{2|1}$. Note that $\rSC_{2|1}$ is irreducible (one sees from its defining equations that its reduced algebraic group is $\rSL_2$, which is known to be irreducible). Let $(\rSpO_{2|1})^0$ denote the irreducible component of $\rSpO_{2|1}$ that contains the identity. We claim $\rSC_{2|1} = (\rSpO_{2|1})^0$. Since $I \in \rSC_{2|1} \cap (\rSpO_{2|1})^0$, it is clear $\rSC_{2|1} \subseteq (\rSpO_{2|1})^0$. Conversely, we see that $(\rSpO_{2|1})^0 \subseteq \rSC_{2|1}$: the restriction of the morphism $\ber$ to the irreducible supervariety $(\rSpO_{2|1})^0$ must be constant, hence equal to $1$. Since we previously showed $\rPC_{2|1}$ is isomorphic to $\rSC_{2|1}$, the proposition is proven. 
\end{proof}

\begin{theorem}
The algebraic supergroup $\rAut_{\SUSY}(\bP^{1|1}_k)$ of SUSY preserving automorphisms of $\bP^{1|1}_k$ is isomorphic to $(\rSpO_{2|1})^0$.
\end{theorem}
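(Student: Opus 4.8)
The plan is to identify $\rAut_{SUSY}(\bP^{1|1}_k)$ with $\rPC_{2|1}$ and then invoke Proposition \ref{irredcomp}. By Proposition \ref{pgl-iso} we have $\Aut(\bP^{1|1}_k)(A) \cong \rPGL_{2|1}(A)$, and by the preceding proposition an automorphism $f$ is SUSY-preserving precisely when some (equivalently every) lift $\widetilde{f} \in \rGL_{2|1}(A)$ satisfies $\widetilde{f}^*(s) = t s$ for an invertible function $t$. The whole argument therefore reduces to deciding, in terms of the matrix $g$ of $\widetilde{f}$, when $\widetilde{f}^*(s)$ is proportional to $s$. First I would record the key observation that the defining section $s = z_1\,dz_0 - z_0\,dz_1 - \zeta\,d\zeta$ is nothing but the contraction of the supersymplectic form $H$ against the radial (Euler) direction: writing $\mathbf{z} = (z_0, z_1, \zeta)^t$, one checks directly that on a tangent vector $v$ one has $s(v) = v^t H \mathbf{z}$, so that $s = \sum_l (H\mathbf{z})_l\, dz_l$.

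The main point of the proof is then the following computation. Since a linear substitution $g \in \rGL_{2|1}(A)$ pulls back each $z_j$ and each $dz_i$ linearly, one obtains $\widetilde{f}^*(s) = \sum_l \big((g^t H g)\mathbf{z}\big)_l\, dz_l$, where $g^t$ is the super-transpose; that is, $\widetilde{f}^*(s)$ is the section attached to the matrix $g^t H g$ exactly as $s$ is attached to $H$. Comparing coefficients of $dz_l$, the relation $\widetilde{f}^*(s) = t s$ becomes $(g^t H g)\mathbf{z} = t\,(H\mathbf{z})$. Because $g^t H g$ and $H$ are both constant invertible matrices, this equality forces $t$ to be a scalar (the only invertible even global functions on $\bP^{1|1}_A$ are the constants $A_0^*$) and yields $g^t H g = t H$, which upon taking Berezinians gives $t = \ber(g)^2$. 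This is exactly the defining equation (\ref{defnC21}) of $\rC_{2|1}$ with $Z(g) = \ber(g)^2$. Thus $\widetilde{f}^*(s) = t s$ holds for some invertible $t$ if and only if $g \in \rC_{2|1}(A)$. I expect this identification of $s$ with $H$, together with the action computation, to be the crux of the whole argument; once it is in place, the remaining steps are formal.

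It remains to pass to the projective quotient and assemble the isomorphisms. Since the scalar matrices $\{aI\}$ automatically lie in $\rC_{2|1}$ (with $Z(aI) = a^2$), the condition $\widetilde{f} \in \rC_{2|1}(A)$ is independent of the chosen lift and says precisely that $f$ lies in the image of $\rC_{2|1}$ in $\rPGL_{2|1}$. Hence, as subsupergroups of $\rPGL_{2|1}$ and after sheafification, $\rAut_{SUSY}(\bP^{1|1}_k) \cong \rPC_{2|1}$. Finally, Proposition \ref{irredcomp} identifies $\rPC_{2|1}$ with the identity component $(\rSpO_{2|1})^0$, and composing the two isomorphisms gives $\rAut_{SUSY}(\bP^{1|1}_k) \cong (\rSpO_{2|1})^0$, as claimed.
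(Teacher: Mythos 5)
Your proposal is correct and follows essentially the same route as the paper: reduce via the preceding proposition to the condition $\widetilde{f}^*(s)=ts$, compute that the pullback of the form attached to $H$ is the form attached to $g^tHg$ so that the condition becomes exactly the defining equation of $\rC_{2|1}$, check independence of the lift, and conclude via $\rPC_{2|1}\cong(\rSpO_{2|1})^0$ from Proposition \ref{irredcomp}. The extra details you supply (that $t$ must be a constant in $A_0^*$ and that $t=\ber(g)^2$) are consistent with what the paper established when proving representability of $\rC_{2|1}$.
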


\begin{proof}
As $\rAut_{\SUSY}(\bP^{1|1}_k)$ is a sheaf, the theorem reduces to the case of calculating $\rAut_{\SUSY}(\bP^{1|1}_k)(A)$ where $A$ is a 
$k$-superalgebra. For this, we note that $\mathbb{P}^{1|1}_A$ has the SUSY structure over $A$ induced by base change from $\mathbb{P}^{1|1}_k$, given by $s$.

Let $g \in \PGL_{2|1}(A)$ be an automorphism of $\mathbb{P}^{1|1}_A$, and 
$\tg$ a lift of $g$ to $\rGL_{2|1}(A)$. Recall that we have a natural action of the group of $A$-points of $\rGL_{2|1}(A)$ on $\Gamma(\cO_A(1) \otimes \Omega_{S/A})$. More concretely, in the given coordinates we have for any matrix $\tg \in \rGL_{2|1}(A)$:
$$
\tg \cdot \begin{pmatrix} z_0 \\ z_1 \\ \zeta \end{pmatrix} \,=\, 
 \tg\begin{pmatrix} z_0 \\ z_1 \\ \zeta \end{pmatrix}, \qquad
\tg \cdot \begin{pmatrix} dz_0 \\ dz_1 \\ d\zeta \end{pmatrix} \,=\,
\tg \begin{pmatrix} dz_0 \\ dz_1 \\ d\zeta \end{pmatrix} \qquad
$$
where we write $z_i$ for $z_i \otimes 1$ and so on. 

By Prop. \ref{SUSYform}, $g$ is SUSY-preserving if and only if $\tg$ sends
$$
s=z_1 dz_0 - z_0 dz_1 - \zeta d \zeta=
\begin{pmatrix} z_0 & z_1 & \zeta \end{pmatrix} 
H \begin{pmatrix} dz_0 \\ dz_1 \\ d\zeta \end{pmatrix}, \qquad
H= \begin{pmatrix} 0 & 1 & 0 \\ -1 & 0 & 0 \\ 0 & 0 & -1 \end{pmatrix},
$$
to a multiple of $s$ by an invertible even function. Hence
$$
\begin{pmatrix} z_0 & z_1 & \zeta \end{pmatrix} \tg\,
{}^tH\tg
\begin{pmatrix} dz_0 \\ dz_1 \\ d\zeta \end{pmatrix}=
\begin{pmatrix} z_0 & z_1 & \zeta \end{pmatrix} Z(\tg)H
\begin{pmatrix} dz_0 \\ dz_1 \\ d\zeta \end{pmatrix},
$$
i.e. $\tg \in \rC_{2|1}(A)$. It follows from equation (\ref{splittingconformal}) that $g$ lies in $\rPC_{2|1}(A)$, 
which is naturally identified with $(\rSpO_{2|1})^0(A)$ by Prop. \ref{irredcomp}.
\end{proof}

\end{document}